\documentclass[review,hidelinks,onefignum,onetabnum]{siamart250106}



\usepackage{bm}
\usepackage{lipsum}
\usepackage{amsfonts,amsmath,amssymb}
\usepackage{graphicx}
\usepackage{epstopdf}
\usepackage{algorithmic}
\usepackage{tikz}
\usepackage{stmaryrd}
\usepackage{enumitem}
\usetikzlibrary{arrows.meta}
\usepackage{caption}
\usepackage{multirow}
\graphicspath{{figures/}}
\ifpdf
  \DeclareGraphicsExtensions{.eps,.pdf,.png,.jpg}
\else
  \DeclareGraphicsExtensions{.eps}
\fi

\newtheorem{example}[theorem]{Example}
\newtheorem{assumption}{Assumption}[section]

\numberwithin{equation}{section}


\def\bz{\bm{\zeta}}

\def\bn{\bm{n}}
\def\Ho{H^1_0(\Omega)}
\def\O{\Omega}

\def\LT{{L_2(\O)}}

\def\cE{\mathcal{E}}
\def\R{\mathcal{R}}
\def\cS{\mathcal{S}}
\def\cC{\mathcal{C}}
\def\cA{\mathcal{A}}

\def\ty{y}
\def\qb{\bm{q}}

\def\H2{{H^2(\O)}}

\def\ES{\Ho\times\Ho}

\def\cT{\mathcal{T}}
\def\cB{\mathcal{B}}

\def\A{\bm{A}}


\def\l{\langle}
\def\r{\rangle}

\def\g{\bm{g}}
\def\u{\bm{u}}
\def\L{\bm{L}}

\def\S{\bm{S}}

\def\D{\bm{D}}

\def\br{\bm{r}}
\def\bq{\bm{q}}
\def\bp{\bm{p}}
\def\lambdae{\bm\lambda}
\def\Lambdae{\bm\Lambda}

\DeclareMathOperator*{\argmin}{argmin}

\newsiamremark{remark}{Remark}
\newsiamremark{hypothesis}{Hypothesis}
\crefname{hypothesis}{Hypothesis}{Hypotheses}
\newsiamthm{claim}{Claim}

\headers{OCP-HDG-BDDC}{S. Liu and J. Zhang}

\title{A balancing domain decomposition by constraints preconditioner for a hybridizable discontinuous Galerkin discretization of an elliptic optimal control problem\thanks{Submitted to the editors 04/02/2025.
\funding{This material is based upon work supported by the National Science Foundation under Grant No. DMS-1929284 while S. L. was in residence at the Institute for Computational and Experimental Research in Mathematics in Providence, RI, during the "Numerical PDEs: Analysis, Algorithms, and Data Challenges" program.}}}

\author{Sijing Liu\thanks{Department of Mathematical Sciences, Worcester Polytechnic Institute, Worcester, MA
  (\email{sliu13@wpi.edu}).}
\and Jinjin Zhang\thanks{Department of Mathematics, The Ohio State University, Columbus, OH
  (\email{zhang.14647@osu.edu}).}}

\usepackage{amsopn}


\ifpdf
\hypersetup{
  pdftitle={An Example Article},
  pdfauthor={D. Doe, P. T. Frank, and J. E. Smith}
}
\fi


\externaldocument[][nocite]{ex_supplement}


\begin{document}

\maketitle

\begin{abstract}
We consider a hybridizable discontinuous Galerkin (HDG) method for an elliptic distributed optimal control problem and we propose a balancing domain decomposition by constraints (BDDC) preconditioner to solve the discretized system. We establish an error estimate of the HDG methods with explicit tracking of a regularization parameter $\beta$. We observe that the BDDC preconditioner is robust with respect to $\beta$. Numerical results are shown to support our findings.
\end{abstract}

\begin{keywords}
elliptic distributed optimal control problems, hybridizable discontinuous Galerkin methods, BDDC algorithms
\end{keywords}

\begin{MSCcodes}
49J20, 49M41, 65N30, 65N55
\end{MSCcodes}

\section{Introduction}
In this work, we consider the following elliptic optimal control problem.
Let $\Omega$ be a bounded convex polygonal domain in $\mathbb{R}^n$ ($n=2,3$), $y_d\in \LT$ and $\beta$ be a positive constant. Find
\begin{equation}\label{optcon}
(\bar{y},\bar{u})=\argmin_{(y,u)}\left [ \frac{1}{2}\|y-y_d\|^2_{\LT}+\frac{\beta}{2}\|u\|^2_{\LT}\right],
\end{equation} 
where $(y,u)$ belongs to $H^1_0(\Omega)\times \LT$ if and only if 
\begin{equation}\label{eq:stateeq}
a(y,v)=\int_{\Omega}uv \ dx \quad \forall v\in H^1_0(\Omega).
\end{equation}
Here the bilinear form $a(\cdot,\cdot)$ is defined as
\begin{equation*}
  a(y,v)=\int_{\Omega} \nabla y\cdot \nabla v\ dx+\int_{\Omega} (\bz\cdot\nabla y) v\ dx+\int_{\Omega} \gamma yv\ dx,
\end{equation*}
where the vector field $\bz\in [W^{1,\infty}(\Omega)]^2$ and the function $\gamma\in W^{1,\infty}(\Omega)$ is nonnegative. We assume 
\begin{equation}\label{eq:advassump}
    \gamma-\frac12\nabla\cdot\bz\ge\gamma_0>0\quad a.e.\ \mbox{in}\ \Omega
\end{equation}
such that the problem \eqref{eq:stateeq} is well-posed (cf. \cite{ayuso2009discontinuous,di2011mathematical}).

It is well-known that (see \cite{Lions, Tro}) the solution of \eqref{optcon}-\eqref{eq:stateeq} is characterized by
\begin{alignat*}{3}
a(q,\bar{p})&=(\bar{y}-y_d,q)_\LT \quad &&\forall q\in H^1_0(\Omega),\\
\bar{p}+\beta\bar{u}&=0,\\
a(\bar{y},z)&=(\bar{u},z)_\LT  \quad &&\forall z\in H^1_0(\Omega),
\end{alignat*}
where $\bar{p}$ is the adjoint state.
After eliminating $\bar{u}$ (cf. \cite{hinze2005variational}), we arrive at the saddle point problem
\begin{subequations}\label{eq:osp}
\begin{alignat}{2}
a(q,\bar{p})-(\bar{y},q)_\LT&=-(y_d,q)_\LT \quad &&\forall q\in H^1_0(\Omega),\\
-(\bar{p},z)_\LT-\beta a(\bar{y},z)&=0  \quad &&\forall z \in H^1_0(\Omega).
\end{alignat}
\end{subequations}

We perform the following change of variables
\begin{equation}\label{equation:betapy}
\bar{p}=-\beta^{\frac{1}{4}}\tilde{p}
\quad 
\text{and} 
\quad 
\bar{y}=\beta^{-\frac{1}{4}}\tilde{y}
\end{equation}
so that the system \eqref{eq:osp} is more balanced with respect to $\beta$. Indeed, we have
\begin{subequations}\label{eq:sp1}
\begin{alignat}{2}
\beta^{\frac{1}{2}}a(q,\tilde{p})+(\tilde{y},q)_\LT&=\beta^{\frac{1}{4}}(y_d,q)_\LT\quad &&\forall q\in H^1_0(\Omega),\\
-(\tilde{p},z)_\LT+\beta^{\frac{1}{2}}a(\tilde{y},z)&=0 \quad &&\forall z\in H^1_0(\Omega).
\end{alignat}
\end{subequations}
We then write \eqref{eq:sp1} concisely as follows, find $(\tilde{p},\tilde{y})\in \ES$ such that
\begin{equation}\label{eq:contprob}
  \cB((\tilde{p},\tilde{y}),(q,z))=\beta^{\frac{1}{4}}(y_d,q)_\LT\quad\forall (q,z)\in\ES,
\end{equation}
where
\begin{equation}\label{eq:contdual}
  \cB((p,y),(q,z))=\beta^{\frac{1}{2}}a(q,p)+(y,q)_\LT-(p,z)_\LT+\beta^{\frac{1}{2}}a(y,z).
\end{equation}
It is well-known that \eqref{eq:sp1} has an unique solution (cf. \cite{brenner2020multigrid,liu2023robust,gaspoz2019quasi,gong2022optimal}).

\begin{remark}\label{rem:scale}
  The scaling technique \eqref{equation:betapy} is well-known (see \cite{brenner2020multigrid, gong2022optimal, gaspoz2019quasi}). However, the scaling we use here differs slightly from those in \cite{brenner2020multigrid, gong2022optimal, gaspoz2019quasi}, which is more convenient, but not essential, for the description of the BDDC algorithm in Section \ref{sec:bddc}.
\end{remark}

Numerical methods for the saddle point formulation \eqref{eq:sp1} of the optimal control problem \eqref{optcon}-\eqref{eq:stateeq} are extensively studied in the literature. For example, concrete error estimates were established in \cite{brenner2020multigrid,Scho} for $\mathbb{P}_1$ continuous Galerkin methods, similar results were also established in \cite{liu2023robust,leykekhman2012local,leykekhman2012investigation} for discontinuous Galerkin methods, and in \cite{chen2018hdg,chen2019hdg} for HDG methods. We focus on HDG methods for the optimal control problem \eqref{optcon}-\eqref{eq:stateeq} in this work. HDG methods have been intensively studied over the past two decades, see \cite{fu2015analysis,cockburn2010projection,MR2429868,cockburn2009unified} and the references therein for more details. It is well-known that traditional discontinuous Galerkin (DG) methods \cite{arnold1982interior,arnold2002unified} have more degrees of freedom than classical continuous Galerkin finite element methods when the polynomial order is less than four, see \cite{riviere2008discontinuous}. HDG overcomes this issue by using static condensation (cf. \cite{cockburn2009unified}) so that the unknowns are only on the skeleton of the mesh while maintaining the advantages of DG methods. In \cite{chen2018hdg,chen2019hdg}, the authors followed a classical approach (cf. \cite{hinze2005variational,geveci1979approximation,falk1973approximation}) to analyze the HDG methods for the optimal control problem \eqref{optcon}-\eqref{eq:stateeq}. This approach decouples the state and the adjoint state variables by introducing an intermediate problem, hence the analysis of HDG methods for single convection-diffusion PDEs (cf. \cite{fu2015analysis}) can be utilized. However, in \cite{chen2018hdg}, the regularization parameter $\beta$ was not taken into account and only $L_2$ error estimates are provided.

The BDDC algorithm is a widely used non-overlapping domain decomposition method. 
Initially introduced in \cite{Doh04} for symmetric positive definite problems, it has since been extended to solve non-symmetric positive definite systems \cite{TuLi:2008:BDDCAD, TZAD2021}. BDDC algorithms have also been extended to solve saddle point problems \cite{TWZstokes2020,LiBDDCS,TZOseen2022}, where the original problems can be reduced to symmetric positive definite problems or non-symmetric positive definite problem through a benign space approach. The BDDC algorithm is usually used as a preconditioner for CG and GMRES algorithms.

Our contribution in this work is two-fold. First, we extend the framework in \cite{brenner2020multigrid,liu2023robust,gong2022optimal} to HDG methods. As mentioned above, the analyses in \cite{chen2018hdg,chen2019hdg,leykekhman2012local,leykekhman2012investigation} utilized an intermediate problem to decouple the state and the adjoint state variables while an inf-sup condition was used in \cite{brenner2020multigrid,liu2023robust,Scho} where the state and adjoint state variables are considered simultaneously. The advantages of using the approaches in \cite{brenner2020multigrid,liu2023robust} are the following:
\begin{itemize}
  \item The convergence is established in a natural energy norm that leads to concrete error estimates with shorter and more elegant proofs.
  \item By using a change of variable mentioned in Remark \ref{rem:scale}, it is more convenient to track the parameter $\beta$ during the analysis, which is an important parameter in the optimal control problems. 
  \item These energy estimates often are used to prove robustness of the corresponding fast solvers (cf. \cite{brenner2020multigrid,liu2023robust,liu2024multigrid}) which are theoretically faster since they converge in an energy norm.
  \item This approach can also be easily extended to the case of convection-dominated state equations \cite{liu2024multigrid}. 
\end{itemize}  
Secondly, we propose a BDDC algorithm to solve the discretized system. This is an extension of the work in \cite{TWZstokes2020,TZAD2021,TuLi:2008:BDDCAD}. We observe that the BDDC algorithm is robust with respect to the parameter $\beta$. This phenomenon is consistent with the work in \cite{brenner2020multigrid,liu2024multigrid,liu2023robust} for multigrid methods. Similar to the results in \cite{LiBDDCS,TWZstokes2020,TuLi:2008:BDDCAD,TZAD2021,TZOseen2022,ZT:2016:Darcy}, the convergence iterations of BDDC algorithms are scalable and independent with increasing number of subdomains. The detailed convergence analysis of BDDC algorithms for symmetric positive definite problems has been given in \cite{LiBDDCS,TWZstokes2020,ZT:2016:Darcy}.  For non-symmetric or indefinite problems, the upper bound and lower bound estimates are established in \cite{TuLi:2008:BDDCAD,TZAD2021,TZOseen2022}.

The rest of the paper is organized as follows. In Section \ref{sec:hdg}, we discuss the HDG formulation of \eqref{eq:contprob} and present some preliminary estimates useful for the analysis. In Section \ref{sec:convhdg}, we derive a concrete error estimate for the HDG methods using an inf-sup condition, along with suitable assumptions on the stabilizers. The parameter $\beta$ is explicitly tracked. We then introduce a BDDC preconditioner in Section \ref{sec:bddc} to solve the discretized system and present numerical results in Section \ref{sec:numerics}. Finally, we end with some concluding remarks in Section \ref{sec:conclude}.

Throughout this paper, we use $C$
 (with or without subscripts) to denote a generic positive constant that is independent of any mesh parameter and $\beta$, unless otherwise stated. In addition, to avoid the proliferation of constants, we use the notation $A\lesssim B$ (or $A\gtrsim B$) to represent $A\leq \text{(constant)}B$. The notation $A\approx B$ is equivalent to $A\lesssim B$ and $B\lesssim A$.

\section{HDG Discretization and Preliminary estimates}\label{sec:hdg}
In this section, we discuss the HDG discretization for the optimal control problem and give some preliminary estimates that are useful for the analysis. For generality, let us consider a more general problem. Find $(p,y)\in \ES$ such that
\begin{equation}\label{eq:gsaddle}
  \cB((p,y),(q,z))=(f,q)_\LT+(g,z)_\LT\quad\forall (q,z)\in\ES,
\end{equation}
where $f$ and $g$ are sufficiently smooth and $\cB$ is defined in \eqref{eq:contdual}.
 Note that $\eqref{eq:gsaddle}$ is equivalent to the following equations,
\begin{subequations}\label{Osystem}
 \begin{alignat}{3}
 \beta^{\frac12}(-\Delta p{-}\nabla\cdot(\bz p)+\gamma p)+y&=f\quad&&\mbox{in}\quad\Omega,\\
 p&=0\qquad &&\mbox{on}\quad\partial\Omega,\\
 \beta^{\frac12}(-\Delta y{+}\bz\cdot\nabla{y}+\gamma y){-}p&=g\quad&&\mbox{in}\quad\Omega,\\
 y&=0\quad&&\mbox{on}\quad\partial\Omega.
 \end{alignat}
 \end{subequations}
 Let $\bq=-\nabla y$ and $\bp=-\nabla p$. we can also write \eqref{Osystem} as a first-order system as follows,
\begin{subequations}\label{eq:fsystem}
 \begin{alignat}{3}
 \bp+\nabla p&=0\quad&&\mbox{in}\quad\Omega,\\
 \beta^{\frac12}(\nabla\cdot \bp-\nabla\cdot(\bz p)+\gamma p)+y&=f\quad&&\mbox{in}\quad\Omega,\\
 p&=0\qquad &&\mbox{on}\quad\partial\Omega,\\
\bq+\nabla y&=0\quad&&\mbox{in}\quad\Omega,\\
 \beta^{\frac12}(\nabla\cdot\bq+\bz\cdot\nabla y+\gamma y){-}p&=g\quad&&\mbox{in}\quad\Omega,\\
 y&=0\quad&&\mbox{on}\quad\partial\Omega.
 \end{alignat}
 \end{subequations}

 \begin{remark}[Regularity]\label{rem:regularity}
   Throughout the paper, we assume the solutions to \eqref{eq:fsystem} are sufficiently smooth. This is reasonable since we only consider convex polygonal domains and sufficiently smooth right-hand sides (see \cite{Gris}). Note that if we only assume $(f,g)\in \LT\times\LT$, the following regularity is valid on convex domains (see \cite{brenner2020multigrid}),
   \begin{equation*}
      \|\beta^{\frac{1}{2}}p\|_{H^2(\O)}+\|\beta^{\frac{1}{2}}y\|_{H^2(\O)}\leq
  C_{\O}(\|f\|_\LT+\|g\|_\LT),
   \end{equation*}
   where $(p,y)$ are the solutions to \eqref{Osystem}.
 \end{remark}

Our first goal is to solve \eqref{eq:fsystem} using HDG discretization and establish the corresponding error estimates (cf. \cite{chen2018hdg}). 
\subsection{HDG formulation}
Let $\mathcal{T}_h$ be a quasi-uniform, shape regular simplicial triangulation of $\Omega$.
The diameter of $K\in\mathcal{T}_h$ is denoted by $h_K$ and $h=\max_{K\in\mathcal{T}_h}h_K$ is the mesh diameter. 
Let $\mathcal{E}_h=\mathcal{E}^b_h\cup\mathcal{E}^i_h$ where $\cE^i_h$ (resp., $\cE^b_h$) represents the set of interior edges (resp., boundary edges).
Define the discrete spaces as follows:
\begin{align*}
&\bm{V}_h=\{\bm{v}\in (L^2(\Omega))^n:\bm{v}|_K\in (\mathbb{P}^k(K))^n,\ \forall K\in\mathcal{T}_h\},\\
&W_h=\{w\in L^2(\Omega): w|_K\in \mathbb{P}^k(K),\ \forall K\in\mathcal{T}_h\},\\
&\Lambda_h=\{\mu\in L^2(\mathcal{E}_h):\mu|_{e}\in \mathbb{P}^k(e),\ \forall e\in \mathcal{E}_h \},\\
&\Lambda^0_h=\{\mu\in \Lambda_h:\mu|_{e}=0,\ \forall e\in\partial\Omega\}.
\end{align*}
Here $k$ is a nonnegative integer and $e$ represents an edge or face in the triangulation $\cT_h$.
We also denote
\begin{equation*}
  \begin{aligned}
    (\eta,\xi)_{\cT_h}=\sum_{K\in\cT_h}\int_K \eta \xi\ \!dx,\quad \l\eta,\xi\r_{\partial\cT_h}=\sum_{K\in\cT_h}\int_{\partial K} \eta \xi\ \!ds.
  \end{aligned}
\end{equation*}
See Figure \ref{fig:hdg} for an illustration of the HDG degree of freedoms. Note that one of the most important features of HDG is that it requires solving only for the degrees of freedom on the edges, while the solution within each triangle can be recovered using these edge degrees of freedom.

\begin{figure}[!h]
\centering
\begin{tikzpicture}
\draw[thick] plot coordinates {(0,1) (0,-1)};
\draw[thick] plot coordinates {(-0.2,1) (-0.2,-1)};
\draw[thick] plot coordinates {(0.2,1) (0.2,-1)};
\draw[thick] plot coordinates {(-1.932,0) (-0.2,1)};
\draw[thick] plot coordinates {(-1.932,0) (-0.2,-1)};
\draw[thick] plot coordinates {(1.932,0) (0.2,1)};
\draw[thick] plot coordinates {(1.932,0) (0.2,-1)};

\node at (0,1) [circle,fill,inner sep=1.5pt]{};
\node at (0,-1) [circle,fill,inner sep=1.5pt]{};


\node at (0.2,-1) [circle,fill,inner sep=1.5pt]{};
\node at (0.2,1) [circle,fill,inner sep=1.5pt]{};
\node at (-0.2,-1) [circle,fill,inner sep=1.5pt]{};
\node at (-0.2,1) [circle,fill,inner sep=1.5pt]{};
\node at (-1.932,0) [circle,fill,inner sep=1.5pt]{};
\node at (1.932,0) [circle,fill,inner sep=1.5pt]{};

\draw[thick] plot coordinates {(5,1) (5,-1)};
\draw[thick] plot coordinates {(4.8,1) (4.8,-1)};
\draw[thick] plot coordinates {(5.2,1) (5.2,-1)};
\draw[thick] plot coordinates {(3.068,0) (4.8,1)};
\draw[thick] plot coordinates {(3.068,0) (4.8,-1)};
\draw[thick] plot coordinates {(6.932,0) (5.2,1)};
\draw[thick] plot coordinates {(6.932,0) (5.2,-1)};

\node at (5,1) [circle,fill,inner sep=1.5pt]{};
\node at (5,0) [circle,fill,inner sep=1.5pt]{};
\node at (5,-1) [circle,fill,inner sep=1.5pt]{};


\node at (5.2,-1) [circle,fill,inner sep=1.5pt]{};
\node at (5.2,0) [circle,fill,inner sep=1.5pt]{};
\node at (5.2,1) [circle,fill,inner sep=1.5pt]{};
\node at (4.8,-1) [circle,fill,inner sep=1.5pt]{};
\node at (4.8,0) [circle,fill,inner sep=1.5pt]{};
\node at (4.8,1) [circle,fill,inner sep=1.5pt]{};
\node at (3.068,0) [circle,fill,inner sep=1.5pt]{};
\node at (6.932,0) [circle,fill,inner sep=1.5pt]{};

\node at (3.934,0.5) [circle,fill,inner sep=1.5pt]{};
\node at (6.066,0.5) [circle,fill,inner sep=1.5pt]{};
\node at (3.934,-0.5) [circle,fill,inner sep=1.5pt]{};
\node at (6.066,-0.5) [circle,fill,inner sep=1.5pt]{};
\end{tikzpicture}
\caption{HDG degrees of freedom for $k=1$ and $k=2$}
\label{fig:hdg}
\end{figure}
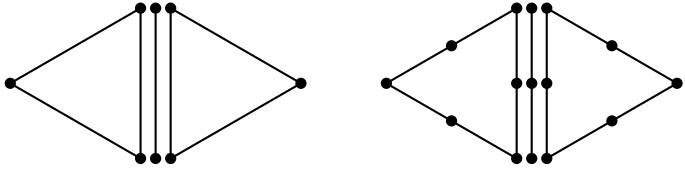

The HDG method for \eqref{eq:fsystem} is to find 
 $(\bm{q}_h,\bm{p}_h, y_h, p_h, \widehat{y}_h,\widehat{p}_h) \in \bm{V}_h\times\bm{V}_h\times W_h\times W_h\times\Lambda^0_h\times\Lambda^0_h $ such that,
\begin{subequations}\label{eq:hdg1}
  \begin{alignat}{1}
(\bm{q}_h,\bm{r}_1)_{\cT_h}-( y_h,\nabla\cdot\bm{r}_1)_{\cT_h}+\l\widehat{{y}}_h,\bm{r}_1\cdot\bm{n}\r_{\partial\cT_h}&=0,\\
(\bm{p}_h,\bm{r}_2)_{\cT_h}-( p_h,\nabla\cdot\bm{r}_2)_{\cT_h}+\l\widehat{{p}}_h,\bm{r}_2\cdot\bm{n}\r_{\partial\cT_h}&=0,\\
-\beta^{\frac12}(\bm{q}_h+\bz y_h,\nabla w_1)_{\cT_h}-( p_h,w_1)_{\cT_h}+\beta^{\frac12}((\gamma-\nabla\cdot\bm{\zeta})y_h, w_1)_{\cT_h}&\\
+\beta^{\frac12}\l\widehat{\bm{q}_h}\cdot\bm{n}+\bz\cdot\bm{n}\widehat{y}_h,w_1\r_{\partial\cT_h}&=(g,w_1)_{\cT_h},\nonumber\\
-\beta^{\frac12}(\bm{p}_h-\bz p_h,\nabla w_2)_{\cT_h}+( y_h,w_2)_{\cT_h}+\beta^{\frac12}(\gamma p_h, w_2)_{\cT_h}&\\
+\beta^{\frac12}\l\widehat{\bm{p}_h}\cdot\bm{n}-\bz\cdot\bm{n}\widehat{p}_h,w_2\r_{\partial\cT_h}
&=(f,w_2)_{\cT_h},\nonumber\\
-\l\widehat{\bm{q}_h}\cdot\bm{n}+\bz\cdot\bm{n}\widehat{y}_h,\mu_1\r_{\partial\cT_h}&=0,\\
-\l\widehat{\bm{p}_h}\cdot\bm{n}-\bz\cdot\bm{n}\widehat{p}_h,\mu_2\r_{\partial\cT_h}&=0,
\end{alignat}
\end{subequations}
for all $(\bm{r}_1,\bm{r}_2,w_1,w_2,\mu_1,\mu_2)\in\bm{V}_h\times\bm{V}_h\times W_h\times W_h\times\Lambda^0_h\times\Lambda^0_h$,
where the numerical fluxes are defined as
\begin{align*}
\widehat{\bm{q}_h}\cdot\bm{n}=\bm{q}_h\cdot\bm{n}+\tau_1( y_h-\widehat{y}_h),\quad \widehat{\bm{p}_h}\cdot\bm{n}=\bm{p}_h\cdot\bm{n}+\tau_2( p_h-\widehat{p}_h).
\end{align*}
Here the stabilizers $\tau_1$ and $\tau_2$ will be discussed in Section \ref{ssec:assumptau}. Note that for each element $K\in\cT_h$, the local problem of HDG method is satisfied such that 
\begin{subequations}\label{eq:hdg1local}
  \begin{alignat}{1}
  (\bm{q}_h,\bm{r}_1)_{K}-( y_h,\nabla\cdot\bm{r}_1)_{K}+\l\widehat{{y}}_h,\bm{r}_1\cdot\bm{n}\r_{K}&=0,\label{eq:qK}\\
(\bm{p}_h,\bm{r}_2)_{K}-( p_h,\nabla\cdot\bm{r}_2)_{K}+\l\widehat{{p}}_h,\bm{r}_2\cdot\bm{n}\r_{K}&=0,\label{eq:pK}\\
-\beta^{\frac12}(\bm{q}_h+\bz y_h,\nabla w_1)_{K}-( p_h,w_1)_{K}+\beta^{\frac12}((\gamma-\nabla\cdot\bm{\zeta})y_h, w_1)_{K}&\label{eq:yK}\\
+\beta^{\frac12}\l\widehat{\bm{q}_h}\cdot\bm{n}+\bz\cdot\bm{n}\widehat{y}_h,w_1\r_{\partial K}&=(g,w_1)_{K},\nonumber\\
-\beta^{\frac12}(\bm{p}_h-\bz p_h,\nabla w_2)_{K}+( y_h,w_2)_{K}+\beta^{\frac12}(\gamma p_h, w_2)_{K}&\label{eq:pKu}\\
+\beta^{\frac12}\l\widehat{\bm{p}_h}\cdot\bm{n}-\bz\cdot\bm{n}\widehat{p}_h,w_2\r_{\partial K}
&=(f,w_2)_{K}\nonumber,
  \end{alignat}
  \end{subequations}
for any $({\bm r}_1, {\bm r}_2,w_1, w_2)\in \bm{V}_h(K)\times\bm{V}_h(K)\times W_h(K)\times W_h(K)$ (cf. \cite{fu2015analysis,TZAD2021}).

Moreover, the equations \eqref{eq:hdg1} are equivalent to find $(\bm{q}_h,\bm{p}_h, y_h, p_h, \widehat{y}_h,\widehat{p}_h) \in \bm{V}_h\times\bm{V}_h\times W_h\times W_h\times\Lambda^0_h\times\Lambda^0_h $ such that,
\begin{subequations}\label{eq:hdg2}
  \begin{alignat}{1}
\beta^{\frac12}(\bm{q}_h,\bm{r}_1)_{\cT_h}-\beta^{\frac12}( y_h,\nabla\cdot\bm{r}_1)_{\cT_h}+\beta^{\frac12}\l\widehat{{y}}_h,\bm{r}_1\cdot\bm{n}\r_{\partial\cT_h}&=0,\label{eq:qh1}\\
\beta^{\frac12}(\bm{p}_h,\bm{r}_2)_{\cT_h}-\beta^{\frac12}( p_h,\nabla\cdot\bm{r}_2)_{\cT_h}+\beta^{\frac12}\l\widehat{{p}}_h,\bm{r}_2\cdot\bm{n}\r_{\partial\cT_h}&=0,\label{eq:ph1}\\
\beta^{\frac12}(\nabla\cdot\bm{q}_h,w_1)_{\cT_h}-\beta^{\frac12}( y_h,\bz\cdot\nabla{w}_1)_{\cT_h}
+\beta^{\frac12}((\gamma-\nabla\cdot\bm{\zeta})y_h, w_1)_{\cT_h}\\\nonumber
-( p_h,w_1)_{\cT_h}+\beta^{\frac12}\l\tau_1 y_h,{ w}_1\r_{\partial \cT_h}
+\beta^{\frac12}\l(\bz\cdot\bm{n}-\tau_1)\widehat{y}_h,w_1\r_{\partial \cT_h}&=(g,w_1)_{\cT_h},\\
\beta^{\frac12}(\nabla\cdot\bm{p}_h,w_2)_{\cT_h}+\beta^{\frac12}( p_h,\bz\cdot\nabla w_2)_{\cT_h}
+\beta^{\frac12}(\gamma p_h, w_2)_{\cT_h}\\\nonumber
+(y_h,w_2)_{\cT_h}+\beta^{\frac12}\l\tau_2 p_h,w_2\r_{\partial\cT_h}-\beta^{\frac12}\l(\tau_2+\bz\cdot\bm{n})\widehat{p}_h,w_2)\r_{\partial\cT_h}&=(f,w_2)_{\cT_h},\\
-\beta^{\frac12}\l\bm{q}_h\cdot\bm{n},\mu_1\r_{\partial \cT_h}-\beta^{\frac12}\l\tau_1 y_h,\mu_1\r_{\partial \cT_h}-\beta^{\frac12}\l(\bz\cdot\bm{n}-\tau_1)\widehat{y}_h,\mu_1)&\r_{\partial\cT_h}=0,\\
-\beta^{\frac12}\l\bm{p}_h\cdot\bm{n},\mu_2\r_{\partial \cT_h}-\beta^{\frac12}\l\tau_2 p_h,\mu_2\r_{\partial \cT_h}+\beta^{\frac12}\l(\bz\cdot\bm{n}+\tau_2)\widehat{p}_h,\mu_2)&\r_{\partial\cT_h}=0,
\end{alignat}
\end{subequations}
for all $(\bm{r}_1,\bm{r}_2,w_1,w_2,\mu_1,\mu_2)\in\bm{V}_h\times\bm{V}_h\times W_h\times W_h\times\Lambda^0_h\times\Lambda^0_h$. The system \eqref{eq:hdg2} is more suitable for the description of the BDDC algorithm.

\subsection{Concise form}

We can write the HDG methods \eqref{eq:hdg2} concisely as follows (cf. \cite{lehrenfeld2010hybrid}). Find $((\bm{q}_h, y_h, \widehat{y}_h),(\bm{p}_h, p_h, \widehat{p}_h))\in (\bm{V}_h\times W_h\times\Lambda^0_h)\times(\bm{V}_h\times W_h\times\Lambda^0_h)$ such that
\begin{equation}\label{eq:hdgconcise}
\begin{aligned}
  &\cB_h(((\bm{q}_h, y_h, \widehat{y}_h),(\bm{p}_h, p_h, \widehat{p}_h)),((\bm{r}_1,w_1,\mu_1),(\bm{r}_2,w_2,\mu_2)))\\
&=(g,w_1)_{\cT_h}+(f,w_2)_{\cT_h},
\end{aligned}
\end{equation}
for all $((\bm{r}_1,w_1,\mu_1),(\bm{r}_2,w_2,\mu_2))\in (\bm{V}_h\times W_h\times\Lambda^0_h)\times(\bm{V}_h\times W_h\times\Lambda^0_h)$,
where $f$ and $g$ are sufficiently smooth and the bilinear form $\cB_h$ is defined as,
\begin{equation}
\begin{aligned}
  &\cB_h(((\bm{q}_h, y_h, \widehat{y}_h),(\bm{p}_h, p_h, \widehat{p}_h)),((\bm{r}_1,w_1,\mu_1),(\bm{r}_2,w_2,\mu_2)))\\
  =&\beta^\frac12 a_{2,h}((\bm{p}_h, p_h, \widehat{p}_h),(\bm{r}_2,w_2,\mu_2))+( y_h,w_2)_{\cT_h}\\
  &-( p_h,w_1)_{\cT_h}+\beta^\frac12a_{1,h}((\bm{q}_h, y_h, \widehat{y}_h),(\bm{r}_1,w_1,\mu_1)).
\end{aligned}
\end{equation}
Here the bilinear forms $a_{1,h}$ and $a_{2,h}$ are defined as 
\begin{equation}\label{eq:hdgah1}
\begin{aligned}
    a_{1,h}&((\bm{q}, v, \lambda),(\bm{r},w,\mu))\\
    =&(\bq,\br)_{\cT_h}-(v,\nabla\cdot\br)_{\cT_h}+\l \lambda,\br\cdot\bn\r_{\partial\cT_h}\\
    &+(\nabla\cdot\bq ,w)_{\cT_h}-(v,\bm{\zeta}\cdot \nabla w)_{\cT_h}+((\gamma-\nabla\cdot\bm{\zeta})v, w)_{\cT_h}\\
  &+\l\lambda\bz\cdot\bn +\tau_1(v-\lambda),w\r_{\partial\cT_h}-\l(\bq+\lambda\bz)\cdot\bn +\tau_1(v-\lambda),\mu\r_{\partial\cT_h}
\end{aligned}
\end{equation}
and
\begin{equation}\label{eq:hdgah2}
\begin{aligned}
    a_{2,h}&((\bm{q}, v, \lambda),(\bm{r},w,\mu))\\
    =&(\bq,\br)_{\cT_h}-(v,\nabla\cdot\br)_{\cT_h}+\l \lambda,\br\cdot\bn\r_{\partial\cT_h}\\
    &+(\nabla\cdot\bq ,w)_{\cT_h}+(v,\bm{\zeta}\cdot \nabla w)_{\cT_h}+(\gamma v, w)_{\cT_h}\\
  &-\l\lambda\bz\cdot\bn +\tau_2(\lambda-v),w\r_{\partial\cT_h}-\l(\bq-\lambda\bz)\cdot\bn +\tau_2(v-\lambda),\mu\r_{\partial\cT_h}.
\end{aligned}
\end{equation}

\subsection{Assumptions on the stabilizers $\tau_1$ and $\tau_2$}\label{ssec:assumptau}

We state the following assumptions about the stabilizers $\tau_1$ and $\tau_2$ (cf. \cite{fu2015analysis,chen2018hdg,TZAD2021}). 
\begin{assumption}\label{assump:tau}
For the stabilizers $\tau_1$ and $\tau_2$, we have the following  assumptions:
\begin{enumerate}[label=(\theassumption\alph*), ref=\theassumption\alph*]
\item\label{assump1} $\tau_1$ is a piecewise positive constant on $\partial\cT_h$ and there exists a constant $C_1$ such that $\tau_1\le C_1$.
\item\label{assump2} $\tau_1=\tau_2+\bm{\zeta}\cdot\bm{n}$.
\item\label{assump3} $\inf\limits_{x\in e}(\tau_1-\frac12\bm{\zeta}\cdot\bm{n})\geq C^*\max_{x\in e}|\bm{\zeta}(x)\cdot\bm{n}|,\ \forall e\in\partial K$ and $\forall K\in\cT_h.$
\end{enumerate}
\end{assumption}

\subsection{Projection operators}

We introduce several standard projection operators that are needed in the error analysis.  Let $\Pi_h :H^1(\cT_h)\rightarrow W_h$ be the projection operator defined as follows 

  \begin{alignat}{3}\label{eq:hdgproj}
    (\Pi_hv, w)_K&=(v,w)_K\quad&&\forall w\in \mathbb{P}^{k}(K),
  \end{alignat}
and $P_\Lambda: L^2(\cE_h)\rightarrow \Lambda_h$ be the $L^2$ orthogonal projection defined as
\begin{equation}\label{eq:hdgprojor}
  \l P_\Lambda v,\mu\r_e=\l v, \mu\r_e\quad\forall e\in \cE_h\ \text{and}\ \mu\in \Lambda_h.
\end{equation}

Let $\bm{\Pi}_h: (H^1(\cT_h))^n\rightarrow \bm{V}_h$
be defined as $\bm{\Pi}_h\bm{v}:=(\Pi_h v_1,\Pi_h v_2,\ldots, \Pi_h v_n)$. Then it is trivial to check that $(\bm{\Pi}_h\bm{v},\bm{w})_K=(\bm{v},\bm{w})_K$ for all $\bm{w}$ in $(\mathbb{P}^{k}(K))^n$.
Let $({\bq},{\bp},{y},{p})$ be sufficiently smooth and let $(\bq_h,\bp_h,y_h,p_h,\widehat{y},\widehat{p}_h)$ belong to $\bm{V}_h\times\bm{V}_h\times W_h\times W_h\times\Lambda^0_h\times\Lambda^0_h $.
For convenience, we define the following terms:
\begin{equation}\label{eq:projnotations}
  \begin{alignedat}{4}
&\epsilon^{\bq}_h={\bq}_h-\bm{\Pi}_h{\bq},\quad &&\delta^{\bq}_h={\bq}-\bm{\Pi}_h{\bq},\quad&&\epsilon^{y}_h=y_h-\Pi_h{y}, \quad &&\delta^{y}_h=y-\Pi_hy,\\
&\epsilon^{\bp}_h=\bp_h-\bm{\Pi}_h{\bp},\quad &&\delta^{\bp}_h={\bp}-\bm{\Pi}_h{\bp},\quad&&\epsilon^{p}_h=p_h-\Pi_h{p}, \quad &&\delta^{p}_h=p-\Pi_hp,\\
&\epsilon^{\widehat{y}}_h=\widehat{y}_h-P_\Lambda y,\quad &&\delta^{\widehat{y}}_h=y|_{e}-P_\Lambda y,\quad&&\epsilon^{\widehat{p}}_h=\widehat{p}_h-P_\Lambda p,\quad &&\delta^{\widehat{p}}_h=p|_{e}-P_\Lambda p.
\end{alignedat}
\end{equation}
We also define the norms
\begin{equation*}
  \|\eta\|^2_{\cT_h}=(\eta,\eta)_{\cT_h},\quad \|\eta\|^2_{1,\cT_h}=(\nabla\eta,\nabla\eta)_{\cT_h},\quad\|\eta\|^2_{\partial\cT_h}=(\eta,\eta)_{\partial\cT_h}.
\end{equation*}
Moreover, we use the notation $|\cdot|_{k+1}:=|\cdot|_{H^{k+1}(\Omega)}$ for simplicity.
It is known that (cf. \cite{BS,lehrenfeld2010hybrid}) the following estimates hold:
\begin{equation}\label{eq:projection}
\begin{alignedat}{3}
&\|\delta^{\bm{q}}_h\|_{\cT_h}+h\|\delta^{\bm{q}}_h\|_{1,\cT_h}
\leq Ch^{k+1}|{\bm{q}}|_{k+1},\quad&&\|\delta^{y}_h\|_{\cT_h}+h\|\delta^{y}_h\|_{1,\cT_h}
\leq Ch^{k+1}|y|_{k+1},\\
&\|\delta^{\bm{p}}\|_{\cT_h}+h\|\delta^{\bm{p}}_h\|_{1,\cT_h}\leq Ch^{k+1}|{\bm{p}}|_{k+1},\quad &&\|\delta^{p}_h\|_{\cT_h}+h\|\delta^{p}_h\|_{1,\cT_h}
\leq Ch^{k+1}|p|_{k+1},\\
&\|\delta^{\widehat{y}}_h\|_{\partial\cT_h}\leq Ch^{k+\frac12}|y|_{k+1},\quad&&\|\delta^{\widehat{p}}_h\|_{\partial\cT_h}\leq Ch^{k+\frac12}|p|_{k+1}.
\end{alignedat}
\end{equation}


\begin{remark}
  Note that we use standard projection operators in our subsequent analysis (cf. \cite{lehrenfeld2010hybrid}). More sophisticated HDG projection operators were established and utilized in \cite{MR2429868,cockburn2010projection,chen2018hdg} to obtain superconvergent results.
\end{remark}

\section{Convergence analysis for HDG methods}\label{sec:convhdg}

In this section, we establish concrete estimates for the HDG methods \eqref{eq:hdg2} in an energy norm, with $\beta$ explicitly tracked. We first establish some important properties of the bilinear forms $a_{1,h}$ and $a_{2,h}$.

\subsection{Properties of $a_{1,h}$ and $a_{2,h}$}

\begin{lemma}
  Under Assumption \ref{assump:tau}, we have, for any $((\bm{q}_h, y_h, \widehat{y}_h),(\bm{p}_h, p_h, \widehat{p}_h))\in (\bm{V}_h\times W_h\times\Lambda^0_h)\times(\bm{V}_h\times W_h\times\Lambda^0_h)$ and $((\bm{r}_1,w_1,\mu_1),(\bm{r}_2,w_2,\mu_2))\in (\bm{V}_h\times W_h\times\Lambda^0_h)\times(\bm{V}_h\times W_h\times\Lambda^0_h)$,
  \begin{equation}\label{eq:hdgahduala}
  a_{2,h}((\bm{p}_h, p_h, \widehat{p}_h),(\bm{r}_2,w_2,\mu_2))=a_{1,h}((-\bm{r}_2,w_2,\mu_2),(-\bm{p}_h, p_h, \widehat{p}_h)).
\end{equation}
\end{lemma}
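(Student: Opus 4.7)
My plan is to verify the identity \eqref{eq:hdgahduala} by a direct, term-by-term expansion using the definitions \eqref{eq:hdgah1}--\eqref{eq:hdgah2}. The structural reason the identity should hold is that $a_{1,h}$ is an HDG discretization of a convection--diffusion--reaction operator while $a_{2,h}$ discretizes its formal adjoint, and the numerical fluxes are designed so that this continuous adjointness survives at the discrete level under Assumption \ref{assump2}. Substituting $(\bm{p}_h,p_h,\widehat{p}_h)$ and $(\bm{r}_2,w_2,\mu_2)$ into $a_{2,h}$ and $(-\bm{r}_2,w_2,\mu_2)$ and $(-\bm{p}_h,p_h,\widehat{p}_h)$ into $a_{1,h}$, the sign flips on the vector arguments combined with the symmetry of $(\cdot,\cdot)_{\cT_h}$ make $(\bm{p}_h,\bm{r}_2)_{\cT_h}$, $-(p_h,\nabla\cdot\bm{r}_2)_{\cT_h}$, $(\nabla\cdot\bm{p}_h,w_2)_{\cT_h}$ and $\langle\widehat{p}_h,\bm{r}_2\cdot\bm{n}\rangle_{\partial\cT_h}$ match their counterparts on the right by inspection.

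The only volume mismatch is on the convection block. Elementwise integration by parts rewrites the term $+(p_h,\bm{\zeta}\cdot\nabla w_2)_{\cT_h}$ from $a_{2,h}$ as
\[
-(\bm{\zeta}\cdot\nabla p_h, w_2)_{\cT_h} - (p_h\nabla\cdot\bm{\zeta}, w_2)_{\cT_h} + \langle p_h\bm{\zeta}\cdot\bm{n}, w_2\rangle_{\partial\cT_h},
\]
so that, together with $(\gamma p_h,w_2)_{\cT_h}$, it reproduces the volume contribution $-(w_2,\bm{\zeta}\cdot\nabla p_h)_{\cT_h}+((\gamma-\nabla\cdot\bm{\zeta})w_2,p_h)_{\cT_h}$ present on the right from $a_{1,h}$. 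This leaves an extra boundary piece $\langle p_h\bm{\zeta}\cdot\bm{n},w_2\rangle_{\partial\cT_h}$ that must be carried into the surface bookkeeping, after which all volume terms agree.

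To close the argument I collect all surface terms on both sides. Using Assumption \ref{assump2}, $\tau_1=\tau_2+\bm{\zeta}\cdot\bm{n}$, I combine the new piece $\langle p_h\bm{\zeta}\cdot\bm{n},w_2\rangle_{\partial\cT_h}$ with the $\tau_2$-stabilizer from $a_{2,h}$ to obtain $\langle\tau_1(p_h-\widehat{p}_h),w_2\rangle_{\partial\cT_h}$, which coincides with the corresponding $\tau_1(w_2-\mu_2)$ contribution from $a_{1,h}$ tested against $p_h$; the same substitution takes care of the $\mu_2$-tested flux terms. The only remaining discrepancy between the two sides is then a single boundary integral of the form $\langle\widehat{p}_h\bm{\zeta}\cdot\bm{n},\mu_2\rangle_{\partial\cT_h}$, and the main (indeed, only nonclerical) step is to see that this integral vanishes. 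This follows from single-valuedness of hybrid variables: since $\widehat{p}_h$ and $\mu_2$ both lie in $\Lambda_h^0$, on an interior edge shared by two elements $K_+$ and $K_-$ the two contributions sum to $\widehat{p}_h\bm{\zeta}\cdot(\bm{n}_{K_+}+\bm{n}_{K_-})\mu_2=0$, and on boundary edges both traces are zero. This completes the verification of \eqref{eq:hdgahduala}; beyond this cancellation and Assumption \ref{assump2}, every other manipulation is routine bookkeeping.
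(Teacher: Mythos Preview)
Your proposal is correct and follows essentially the same approach as the paper: both proofs expand the two bilinear forms, use elementwise integration by parts on the convection term together with Assumption~\ref{assump2} ($\tau_1=\tau_2+\bm{\zeta}\cdot\bm{n}$) to match all volume and most surface contributions, and then identify the sole remaining discrepancy as $\langle\widehat{p}_h\,\bm{\zeta}\cdot\bm{n},\mu_2\rangle_{\partial\cT_h}$, which vanishes by single-valuedness of $\widehat{p}_h,\mu_2\in\Lambda_h^0$ on interior faces and their zero trace on $\partial\Omega$. The only cosmetic difference is that the paper integrates by parts on the $a_{1,h}$ side while you do so on the $a_{2,h}$ side, which is immaterial.
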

\begin{proof}
  First we notice that, under the assumption \eqref{assump2}, we have
  \begin{equation}\label{eq:hdgahdual}
\begin{aligned}
  &a_{1,h}((-\bm{r},w,\mu),(-\bm{q}, v, \lambda))\\
  =&(\bq,\br)_{\cT_h}+(w,\nabla\cdot\bq)_{\cT_h}-\l \mu,\bq\cdot\bn\r_{\partial\cT_h}\\
    &-(\nabla\cdot\br ,v)_{\cT_h}-(w,\bm{\zeta}\cdot \nabla v)_{\cT_h}+((\gamma-\nabla\cdot\bm{\zeta})w, v)_{\cT_h}\\
  &+\l\mu\bz\cdot\bn +\tau_1(w-\mu),v\r_{\partial\cT_h}-\l(-\br+\mu\bz)\cdot\bn +\tau_1(w-\mu),\lambda\r_{\partial\cT_h}\\
  =&(\bq,\br)_{\cT_h}-(v,\nabla\cdot\br)_{\cT_h}+\l \lambda,\br\cdot\bn\r_{\partial\cT_h}\\
  &+(w,\nabla\cdot\bq)_{\cT_h}+(v,\bm{\zeta}\cdot\nabla w)_{\cT_h}+(\gamma w, v)_{\cT_h}\\
  &\l\lambda\bm{\zeta}\cdot\bn+\tau_2(\lambda-v), w\r_{\partial\cT_h}-\l\bq\cdot\bn+\tau_2(v-\lambda),\mu\r_{\partial\cT_h}.
\end{aligned}
\end{equation}
Compare $a_{2,h}((\bm{p}_h, p_h, \widehat{p}_h),(\bm{r}_2,w_2,\mu_2))$ with $a_{1,h}((-\bm{r}_2,w_2,\mu_2),(-\bm{p}_h, p_h, \widehat{p}_h))$ and use \eqref{eq:hdgahdual}, notice that the only difference is the term $\l\widehat{p}_h\bm{\zeta}\cdot\bn,\mu_2\r_{\partial\cT_h}$. However, we have $\l\widehat{p}_h\bm{\zeta}\cdot\bn,\mu_2\r_{\partial\cT_h}=0$ since $\widehat{p}_h$ is single valued on the interior faces and $\widehat{p}_h=0$ on the boundary $\partial\O$. The relation \eqref{eq:hdgahduala} then follows.
\end{proof}

\begin{remark}
The relation \eqref{eq:hdgahduala} indicates that the discrete bilinear forms $a_{1,h}$ and $a_{2,h}$ are almost dual to each other, slightly different from the continuous problem \eqref{eq:contdual}. Similar results can be found in \cite{chen2019hdg,chen2018hdg}.
\end{remark}

\begin{lemma}\label{lem:ahnorm}
  For any $(\br_h, w_h, \mu_h)\in (\bm{V}_h\times W_h\times\Lambda^0_h)$, we have
  \begin{alignat*}{2}
    &a_{1,h}((\br_h, w_h, \mu_h),(\br_h, w_h, \mu_h))\\
    &=(\br_h,\br_h)_{\cT_h}+((\gamma-\frac12\nabla\cdot\bm{\zeta})w_h, w_h)_{\cT_h}+\l(\tau_1-\frac12\bz\cdot\bn)(w_h-\mu_h),(w_h-\mu_h)\r_{\partial\cT_h},\nonumber\\
    &a_{2,h}((\br_h, w_h, \mu_h),(\br_h, w_h, \mu_h))\\
    &=(\br_h,\br_h)_{\cT_h}+((\gamma-\frac12\nabla\cdot\bm{\zeta})w_h, w_h)_{\cT_h}+\l(\tau_2+\frac12\bz\cdot\bn)(w_h-\mu_h),(w_h-\mu_h)\r_{\partial\cT_h}.\nonumber
  \end{alignat*}
\end{lemma}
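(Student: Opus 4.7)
The plan is to simply substitute $(\bm{r}_h,w_h,\mu_h)$ into both arguments of $a_{1,h}$ (resp.\ $a_{2,h}$) as given in \eqref{eq:hdgah1} (resp.\ \eqref{eq:hdgah2}) and then simplify by algebraic cancellations, one integration-by-parts, and use of the fact that $\mu_h$ is single-valued on interior faces with $\mu_h=0$ on $\partial\Omega$.

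First I would observe the immediate cancellations. The terms $-(w_h,\nabla\cdot\bm{r}_h)_{\cT_h}$ and $(\nabla\cdot\bm{r}_h,w_h)_{\cT_h}$ cancel against each other. Likewise, the face term $\langle\mu_h,\bm{r}_h\cdot\bn\rangle_{\partial\cT_h}$ from the first group cancels with the $-\langle \bm{r}_h\cdot\bn,\mu_h\rangle_{\partial\cT_h}$ piece sitting inside the last bracket of \eqref{eq:hdgah1}. What remains are the squared $\bm{r}_h$ term, the reaction/convection volume terms, and the $\bz$- and $\tau_1$-weighted face terms.

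Next I would process the convection term using integration by parts element-by-element:
\begin{equation*}
-(w_h,\bz\cdot\nabla w_h)_{\cT_h}=\tfrac12((\nabla\cdot\bz)w_h,w_h)_{\cT_h}-\tfrac12\langle\bz\cdot\bn,w_h^2\rangle_{\partial\cT_h}.
\end{equation*}
Combined with $((\gamma-\nabla\cdot\bz)w_h,w_h)_{\cT_h}$, this gives the reaction-type volume term $((\gamma-\tfrac12\nabla\cdot\bz)w_h,w_h)_{\cT_h}$ plus a leftover boundary term $-\tfrac12\langle\bz\cdot\bn,w_h^2\rangle_{\partial\cT_h}$. The remaining $\tau_1$ contributions assemble as $\langle\tau_1(w_h-\mu_h),w_h-\mu_h\rangle_{\partial\cT_h}$ after pairing the two $\tau_1$ terms, so the only thing left to match the claimed formula is to reshape the $\bz$-weighted face terms into $-\tfrac12\langle\bz\cdot\bn(w_h-\mu_h),w_h-\mu_h\rangle_{\partial\cT_h}$.

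The key step here is the identity
\begin{equation*}
-\tfrac12\langle\bz\cdot\bn,w_h^2\rangle_{\partial\cT_h}+\langle\mu_h\bz\cdot\bn,w_h-\mu_h\rangle_{\partial\cT_h}
=-\tfrac12\langle\bz\cdot\bn(w_h-\mu_h),w_h-\mu_h\rangle_{\partial\cT_h}-\tfrac12\langle\bz\cdot\bn,\mu_h^2\rangle_{\partial\cT_h},
\end{equation*}
which is pure algebra. The last term vanishes because $\mu_h\in\Lambda^0_h$ is single-valued on each interior face (so the $\bn$-flip between neighboring elements cancels the two contributions) and $\mu_h=0$ on $\partial\Omega$. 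This is the only place where the structure of $\Lambda^0_h$ really enters, and it is the one potentially subtle point in the proof; everything else is bookkeeping. The statement for $a_{2,h}$ follows by the identical computation with the convection sign flipped: the volume convection piece integrates by parts to $+\tfrac12\langle\bz\cdot\bn,w_h^2\rangle_{\partial\cT_h}$, and the $\bz$ face terms combine to produce $+\tfrac12\langle\bz\cdot\bn(w_h-\mu_h),w_h-\mu_h\rangle_{\partial\cT_h}$, which is absorbed into the $\tau_2$-weighted face term. I expect no additional obstacles beyond careful sign-tracking.
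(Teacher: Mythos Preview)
Your proposal is correct and follows essentially the same route as the paper's proof: substitute into \eqref{eq:hdgah1}, cancel the obvious pairs, integrate the convection term by parts elementwise, regroup the face terms, and use $\langle\bz\cdot\bn\,\mu_h,\mu_h\rangle_{\partial\cT_h}=0$ from $\mu_h\in\Lambda^0_h$. The paper compresses the intermediate algebra slightly more, but the ingredients and order are the same, and the $a_{2,h}$ case is likewise declared analogous.
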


\begin{proof}
  We prove the identity involves $a_{1,h}$, the one involves $a_{2,h}$ is similar. It follows from \eqref{eq:hdgah1} and integration by parts that
  \begin{equation*}
\begin{aligned}
    &a_{1,h}((\br_h, w_h, \mu_h),(\br_h, w_h, \mu_h))\\
    =&(\br_h,\br_h)_{\cT_h}-(w_h,\nabla\cdot\br_h)_{\cT_h}+\l \mu_h,\br_h\cdot\bn\r_{\partial\cT_h}\\
    &+(\nabla\cdot\br_h ,w_h)_{\cT_h}-(w_h,\bm{\zeta}\cdot \nabla w_h)_{\cT_h}+((\gamma-\nabla\cdot\bm{\zeta})w_h, w_h)_{\cT_h}\\
  &+\l\mu_h\bz\cdot\bn +\tau_1(w_h-\mu_h),w_h\r_{\partial\cT_h}-\l(\br_h+\mu_h\bz)\cdot\bn +\tau_1(w_h-\mu_h),\mu_h\r_{\partial\cT_h}\\
  =&(\br_h,\br_h)_{\cT_h}+((\gamma-\frac12\nabla\cdot\bm{\zeta})w_h, w_h)_{\cT_h}\\
  &+\l(\tau_1-\frac12\bz\cdot\bn)(w_h-\mu_h),(w_h-\mu_h)\r_{\partial\cT_h}-\frac12\l\bz\cdot\bn\mu_h,\mu_h\r_{\partial\cT_h}\\
  =&(\br_h,\br_h)_{\cT_h}+((\gamma-\frac12\nabla\cdot\bm{\zeta})w_h, w_h)_{\cT_h}+\l(\tau_1-\frac12\bz\cdot\bn)(w_h-\mu_h),(w_h-\mu_h)\r_{\partial\cT_h},
\end{aligned}
\end{equation*}
where we use $\l\bz\cdot\bn\mu_h,\mu_h\r_{\partial\cT_h}=0$.
\end{proof}

According to Lemma \ref{lem:ahnorm} and assumptions \eqref{assump2} and \eqref{assump3}, we define a scaled energy norm as follows,
\begin{equation}\label{eq:enorm}
  \|(\br,w,\mu)\|_{1,\beta}^2=\beta^\frac12(\|\br\|^2_{\cT_h}+\|w\|^2_{\cT_h}+\||\tau_1-\frac12\bm{\zeta}\cdot\bn|^\frac12(w-\mu)\|_{\partial\cT_h}^2)+\|w\|^2_{\cT_h}.
\end{equation}

\subsection{An inf-sup condition}
Using \eqref{eq:hdgahduala}, we can replace the bilinear form $\cB_h$ in the HDG discretization \eqref{eq:hdgconcise} as 
\begin{equation}
\begin{aligned}
  &\cB_h(((\bm{q}_h, y_h, \widehat{y}_h),(\bm{p}_h, p_h, \widehat{p}_h)),((\bm{r}_1,w_1,\mu_1),(\bm{r}_2,w_2,\mu_2)))\\
  =&\beta^\frac12 a_{1,h}((-\bm{r}_2,w_2,\mu_2),(-\bm{p}_h, p_h, \widehat{p}_h))+( y_h,w_2)_{\cT_h}\\
  &-( p_h,w_1)_{\cT_h}+\beta^\frac12a_{1,h}((\bm{q}_h, y_h, \widehat{y}_h),(\bm{r}_1,w_1,\mu_1)).
\end{aligned}
\end{equation}

\begin{lemma}[Inf-sup] \label{lem:infsup}
Under Assumption \ref{assump:tau}, for any $((\bm{q}_h, y_h, \widehat{y}_h),(\bm{p}_h, p_h, \widehat{p}_h))$ in $(\bm{V}_h\times W_h\times\Lambda^0_h)\times(\bm{V}_h\times W_h\times\Lambda^0_h)$, we have
\begin{equation*}
\begin{aligned}
   &\|(\bm{p}_h, p_h, \widehat{p}_h)\|_{1,\beta}+\|(\bm{q}_h, y_h, \widehat{y}_h)\|_{1,\beta}\\
   \lesssim&\sup_{\substack{(\br_i,w_i,\mu_i)\in\bm{V}_h\times W_h\times\Lambda^0_h\\ i=1,2}}\frac{\cB_h(((\bm{q}_h, y_h, \widehat{y}_h),(\bm{p}_h, p_h, \widehat{p}_h)),((\bm{r}_1,w_1,\mu_1),(\bm{r}_2,w_2,\mu_2)))}{\|(\br_1,w_1,\mu_1)\|_{1,\beta}+\|(\br_2,w_2,\mu_2)\|_{1,\beta}}.
\end{aligned}
\end{equation*}
\end{lemma}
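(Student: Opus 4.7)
The strategy is to construct a test function that simultaneously activates the coercivity of $a_{1,h}$ given by Lemma \ref{lem:ahnorm} and the antisymmetric $L^2$-coupling $(y_h,w_2)_{\cT_h}-(p_h,w_1)_{\cT_h}$, so as to control every term of $\|(\bm{q}_h,y_h,\widehat{y}_h)\|_{1,\beta}^2+\|(\bm{p}_h,p_h,\widehat{p}_h)\|_{1,\beta}^2$. I would take
\[
(\br_1,w_1,\mu_1)=(\bm{q}_h,\,y_h-\alpha p_h,\,\widehat{y}_h),\qquad (\br_2,w_2,\mu_2)=(\bm{p}_h,\,p_h+\alpha y_h,\,\widehat{p}_h)
\]
for a small parameter $\alpha>0$ (independent of $h$ and $\beta$); the bound $\sum_{i=1,2}\|(\br_i,w_i,\mu_i)\|_{1,\beta}\lesssim \|(\bm{q}_h,y_h,\widehat{y}_h)\|_{1,\beta}+\|(\bm{p}_h,p_h,\widehat{p}_h)\|_{1,\beta}$ is then immediate from the definition of $\|\cdot\|_{1,\beta}$.

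The diagonal contribution (the $\alpha=0$ part) reduces, after the cancellation $(y_h,p_h)_{\cT_h}-(p_h,y_h)_{\cT_h}=0$, to $\beta^{1/2}\bigl[a_{1,h}((\bm{q}_h,y_h,\widehat{y}_h),(\bm{q}_h,y_h,\widehat{y}_h))+a_{1,h}((-\bm{p}_h,p_h,\widehat{p}_h),(-\bm{p}_h,p_h,\widehat{p}_h))\bigr]$. Lemma \ref{lem:ahnorm}, together with \eqref{eq:advassump} and Assumption \ref{assump3}, bounds this from below by $\beta^{1/2}$ times $\|\bm{q}_h\|^2_{\cT_h}+\|y_h\|^2_{\cT_h}+\|\bm{p}_h\|^2_{\cT_h}+\|p_h\|^2_{\cT_h}$ plus both weighted jump norms, which accounts for every contribution to $\|(\bm{q}_h,y_h,\widehat{y}_h)\|_{1,\beta}^2+\|(\bm{p}_h,p_h,\widehat{p}_h)\|_{1,\beta}^2$ \emph{except} the standalone $\|y_h\|_{\cT_h}^2+\|p_h\|_{\cT_h}^2$ terms.

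The $\alpha$-perturbation supplies this missing part through the coupling $(y_h,\alpha y_h)_{\cT_h}+(p_h,\alpha p_h)_{\cT_h}=\alpha(\|y_h\|_{\cT_h}^2+\|p_h\|_{\cT_h}^2)$; it also produces residual cross-terms $\alpha\beta^{1/2}\bigl[a_{1,h}((0,y_h,0),(-\bm{p}_h,p_h,\widehat{p}_h))+a_{1,h}((\bm{q}_h,y_h,\widehat{y}_h),(0,-p_h,0))\bigr]$. A direct expansion via \eqref{eq:hdgah1} shows that the convective terms $(y_h,\bm{\zeta}\cdot\nabla p_h)_{\cT_h}$ and reactive terms $((\gamma-\nabla\cdot\bm{\zeta})y_h,p_h)_{\cT_h}$ cancel between the two evaluations, and combining the boundary weights via Assumption \ref{assump2} collapses the residual to
\[
\alpha\beta^{1/2}\bigl[(y_h,\nabla\cdot\bm{p}_h)_{\cT_h}-(\nabla\cdot\bm{q}_h,p_h)_{\cT_h}+\l\tau_2\widehat{y}_h,p_h\r_{\partial\cT_h}-\l\tau_1 y_h,\widehat{p}_h\r_{\partial\cT_h}\bigr].
\]
Elementwise integration by parts on the volume divergence pairings, together with the decompositions $\widehat{y}_h=y_h-(y_h-\widehat{y}_h)$ and $\widehat{p}_h=p_h-(p_h-\widehat{p}_h)$, expresses this residual in terms of jumps $y_h-\widehat{y}_h$ and $p_h-\widehat{p}_h$ (whose weighted norms appear in $\|\cdot\|_{1,\beta}$) and products already present in the diagonal coercivity bound. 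Young's inequality together with a sufficiently small choice of $\alpha$ then absorbs the residual into the diagonal lower bound plus $\alpha(\|y_h\|_{\cT_h}^2+\|p_h\|_{\cT_h}^2)$, yielding the claimed inf-sup.

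\textbf{Main obstacle.} The delicate step is precisely this bound on the residual cross-terms: since neither $\|\nabla\cdot\bm{p}_h\|_{\cT_h}$ nor $\|\widehat{y}_h\|_{\partial\cT_h}$ is individually controlled by $\|\cdot\|_{1,\beta}$, a naive Cauchy--Schwarz combined with an inverse trace inequality would introduce a factor of $h^{-1}$ and destroy the $h$-uniformity of the inf-sup constant. The cancellations uncovered above via Assumption \ref{assump2}, together with the systematic rewriting of boundary pairings through the jumps $w-\mu$ that \emph{do} appear in the energy norm, are what render the inf-sup constant uniform in both the mesh size $h$ and the regularization parameter $\beta$.
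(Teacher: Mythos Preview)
Your plan has a genuine gap: perturbing only the scalar slots $w_1,w_2$ is not enough to make the residual controllable by $\|\cdot\|_{1,\beta}$. After your cancellations, the residual still contains the term $\alpha\beta^{1/2}(y_h,\nabla\cdot\bm{p}_h)_{\cT_h}$ (and its mirror in $\bm{q}_h,p_h$). Integration by parts turns this into $-\alpha\beta^{1/2}(\nabla y_h,\bm{p}_h)_{\cT_h}+\alpha\beta^{1/2}\langle y_h,\bm{p}_h\cdot\bn\rangle_{\partial\cT_h}$; the decomposition $y_h=(y_h-\widehat y_h)+\widehat y_h$ does not help, because neither $\|\nabla y_h\|_{\cT_h}$ nor $\langle\widehat y_h,\bm{p}_h\cdot\bn\rangle_{\partial\cT_h}$ is bounded by the energy norm, and estimating them via inverse/trace inequalities brings in exactly the factor $h^{-1}$ you were trying to avoid. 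The cancellations you invoke through Assumption~\ref{assump2} only eliminate the convective/reactive pieces and the $\tau$-weighted boundary pairings; they do nothing for the ``first-order block'' $-(v,\nabla\cdot\br)+\langle\lambda,\br\cdot\bn\rangle+(\nabla\cdot\bq,w)-\langle\bq\cdot\bn,\mu\rangle$ of $a_{1,h}$, which is precisely where the uncontrolled terms live.

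The paper's argument avoids this entirely by a different choice of test function that perturbs \emph{all three} slots simultaneously:
\[
(\br_1,w_1,\mu_1)=(\bm{q}_h-\bm{p}_h,\;y_h-p_h,\;\widehat y_h-\widehat p_h),\qquad
(\br_2,w_2,\mu_2)=(\bm{q}_h+\bm{p}_h,\;y_h+p_h,\;\widehat y_h+\widehat p_h).
\]
With this choice (no small parameter $\alpha$, no Young absorption), the coupling terms give $\|y_h\|_{\cT_h}^2+\|p_h\|_{\cT_h}^2$ directly, and the duality relation \eqref{eq:hdgahduala} makes the $a_{1,h}$/$a_{2,h}$ cross terms cancel, leaving only the diagonal values $a_{1,h}((\bm q_h,y_h,\widehat y_h),(\bm q_h,y_h,\widehat y_h))$ and $a_{1,h}((\bm p_h,p_h,\widehat p_h),(\bm p_h,p_h,\widehat p_h))$; Lemma~\ref{lem:ahnorm} then gives the full lower bound, and the parallelogram law controls the denominator. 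The point you are missing is that perturbing $\br$ and $\mu$ in step with $w$ is exactly what supplies the counterterms needed to kill the first-order block contributions that your scalar-only perturbation leaves exposed.
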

\begin{proof}
    Given $((\bm{q}_h, y_h, \widehat{y}_h),(\bm{p}_h, p_h, \widehat{p}_h))\in (\bm{V}_h\times W_h\times\Lambda^0_h)\times(\bm{V}_h\times W_h\times\Lambda^0_h)$, take $(\bm{r}_1,w_1,\mu_1)=(\bm{q}_h-\bm{p}_h, y_h- p_h, \widehat{y}_h-\widehat{p}_h)$ and $(\bm{r}_2,w_2,\mu_2)=(\bm{q}_h+\bm{p}_h, y_h+ p_h, \widehat{y}_h+\widehat{p}_h)$. A simple calculation shows that
  \begin{equation*}
    \begin{aligned}
      &\cB_h(((\bm{q}_h, y_h, \widehat{y}_h),(\bm{p}_h, p_h, \widehat{p}_h)),((\bm{r}_1,w_1,\mu_1),(\bm{r}_2,w_2,\mu_2)))\\
  =&\beta^\frac12 a_{1,h}((\bm{p}_h, p_h, \widehat{p}_h),(\bm{p}_h, p_h, \widehat{p}_h))+( y_h, y_h)_{\cT_h}+( p_h, p_h)_{\cT_h}\\
  &+\beta^\frac12a_{1,h}((\bm{q}_h, y_h, \widehat{y}_h),(\bm{q}_h, y_h, \widehat{y}_h))\\
  \gtrsim&\|(\bm{p}_h, p_h, \widehat{p}_h)\|_{1,\beta}^2+\|(\bm{q}_h, y_h, \widehat{y}_h)\|_{1,\beta}^2,
    \end{aligned}
  \end{equation*}
  where we use the assumption \eqref{eq:advassump}.
It follows from the parallelogram law \cite{BS} that
\begin{equation*}
\begin{aligned}
   &\|(\bm{q}_h-\bm{p}_h, y_h- p_h, \widehat{y}_h-\widehat{p}_h)\|_{1,\beta}^2+\|(\bm{q}_h+\bm{p}_h, y_h+ p_h, \widehat{y}_h+\widehat{p}_h)\|^2_{1,\beta}\\
  =&2(\|(\bm{p}_h, p_h, \widehat{p}_h)\|_{1,\beta}^2+\|(\bm{q}_h, y_h, \widehat{y}_h)\|_{1,\beta}^2).
\end{aligned}
\end{equation*}
This finishes the proof.
\end{proof}

\begin{remark}
  The inf-sup condition guarantees the well-posedness of the HDG method \eqref{eq:hdgconcise}, or equivalently \eqref{eq:hdg2}, by the standard saddle point theory in \cite{Brezzi,Bab}. 
\end{remark}

\subsection{Concrete error estimates}

It is well-known that HDG methods are consistent (cf. \cite{fu2015analysis}), hence we have the following Galerkin orthogonality. Let $({\bq},{\bp},{y},{p})$ be the solution of \eqref{eq:fsystem} and let $(\bq_h,\bp_h,y_h,p_h,\widehat{y},\widehat{p}_h)$ be the HDG solution of \eqref{eq:hdg2}. We have, 
\begin{equation}\label{eq:go}
   \cB_h(((\bq-\bm{q}_h,y- y_h, y-\widehat{y}_h),(\bm{p}-\bm{p}_h,p- p_h, p-\widehat{p}_h)),((\br_1,w_1,\mu_1),(\br_2,w_2,\mu_2)))=0,
\end{equation}
for all $((\bm{r}_1,w_1,\mu_1),(\bm{r}_2,w_2,\mu_2))\in (\bm{V}_h\times W_h\times\Lambda^0_h)\times(\bm{V}_h\times W_h\times\Lambda^0_h)$.
\begin{lemma}
  We have,
  \begin{equation}\label{eq:epsih}
  \begin{aligned}
     \|(\epsilon_h^{\bq}&,\epsilon_h^{y},\epsilon_h^{\widehat{y}})\|_{1,\beta}+\|(\epsilon_h^{\bm{p}},\epsilon_h^{p},\epsilon_h^{\widehat{p}})\|_{1,\beta}\\
  &\le C(\beta^\frac14h^{k+\frac12}+h^{k+1})(|p|_{k+1}+|\bp|_{k+1}+|y|_{k+1}+|\bq|_{k+1}),
  \end{aligned}
  \end{equation}
where we use the notations defined in \eqref{eq:projnotations}.
\end{lemma}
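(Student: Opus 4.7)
The plan is a textbook Strang-type argument: use the inf-sup condition of Lemma \ref{lem:infsup} to bound the energy-norm error by a supremum of $\cB_h$ acting on the discrete error pair, then invoke Galerkin orthogonality \eqref{eq:go} to rewrite that supremum with projection errors (the $\delta$-quantities) in place of the discrete errors (the $\epsilon$-quantities), and finally estimate each term using the orthogonality properties \eqref{eq:hdgproj}--\eqref{eq:hdgprojor} of the projections together with the approximation estimates \eqref{eq:projection}.

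First, I apply Lemma \ref{lem:infsup} to $((\bm{q}_h,y_h,\widehat{y}_h),(\bm{p}_h,p_h,\widehat{p}_h))$ replaced by $((\epsilon_h^{\bq},\epsilon_h^y,\epsilon_h^{\widehat y}),(\epsilon_h^{\bp},\epsilon_h^p,\epsilon_h^{\widehat p}))$. Writing $\bq-\bq_h=\delta_h^{\bq}-\epsilon_h^{\bq}$ and similarly for the other five components, Galerkin orthogonality \eqref{eq:go} together with the linearity of $\cB_h$ gives
\begin{equation*}
\cB_h\bigl((\epsilon_h^{\bq},\epsilon_h^y,\epsilon_h^{\widehat y}),(\epsilon_h^{\bp},\epsilon_h^p,\epsilon_h^{\widehat p});(\br_1,w_1,\mu_1),(\br_2,w_2,\mu_2)\bigr)
=\cB_h\bigl((\delta_h^{\bq},\delta_h^y,\delta_h^{\widehat y}),(\delta_h^{\bp},\delta_h^p,\delta_h^{\widehat p});(\br_1,w_1,\mu_1),(\br_2,w_2,\mu_2)\bigr),
\end{equation*}
so the task reduces to bounding the right-hand side by $C(\beta^{1/4}h^{k+1/2}+h^{k+1})(|p|_{k+1}+|\bp|_{k+1}+|y|_{k+1}+|\bq|_{k+1})$ times $\|(\br_1,w_1,\mu_1)\|_{1,\beta}+\|(\br_2,w_2,\mu_2)\|_{1,\beta}$.

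Next I expand $a_{1,h}((\delta_h^{\bq},\delta_h^y,\delta_h^{\widehat y}),(\br_1,w_1,\mu_1))$ term by term using \eqref{eq:hdgah1}. The defining orthogonalities of $\Pi_h$, $\bm{\Pi}_h$, and $P_\Lambda$ kill three of the eight terms: $(\delta_h^{\bq},\br_1)_{\cT_h}=0$, $(\delta_h^y,\nabla\cdot\br_1)_{\cT_h}=0$, and, crucially, $\langle\delta_h^{\widehat y},\br_1\cdot\bn\rangle_{\partial\cT_h}=0$ because $\br_1\cdot\bn|_e\in\mathbb{P}^k(e)$. An elementwise integration by parts reduces $(\nabla\cdot\delta_h^{\bq},w_1)_{\cT_h}$ to the boundary contribution $\langle\delta_h^{\bq}\cdot\bn,w_1\rangle_{\partial\cT_h}$. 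After reorganizing, the facial pieces combine into a single term of the form $\langle\delta_h^{\bq}\cdot\bn+\delta_h^{\widehat y}\bz\cdot\bn+\tau_1(\delta_h^y-\delta_h^{\widehat y}),\,w_1-\mu_1\rangle_{\partial\cT_h}$, and the volume pieces into $-(\delta_h^y,\bz\cdot\nabla w_1)_{\cT_h}+((\gamma-\nabla\cdot\bz)\delta_h^y,w_1)_{\cT_h}$. The same reduction works for $a_{2,h}$ via \eqref{eq:hdgahdual}.

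Each surviving term is now estimated. The boundary term is controlled via Cauchy--Schwarz against the $\||\tau_1-\tfrac12\bz\cdot\bn|^{1/2}(w_1-\mu_1)\|_{\partial\cT_h}$ piece of the energy norm, using the trace-approximation bounds $\|\delta_h^{\bq}\|_{\partial\cT_h}\lesssim h^{k+1/2}|\bq|_{k+1}$, $\|\delta_h^y\|_{\partial\cT_h}+\|\delta_h^{\widehat y}\|_{\partial\cT_h}\lesssim h^{k+1/2}|y|_{k+1}$; multiplying by the prefactor $\beta^{1/2}$ and absorbing one $\beta^{1/4}$ into the norm produces $\beta^{1/4}h^{k+1/2}$. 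The convective volume term is handled by subtracting a piecewise constant: $(\delta_h^y,\bz\cdot\nabla w_1)_{\cT_h}=(\delta_h^y,(\bz-\bar{\bz})\cdot\nabla w_1)_{\cT_h}$ (because $\bar{\bz}\cdot\nabla w_1\in\mathbb{P}^{k-1}\subset\mathbb{P}^k$), then $\|\bz-\bar{\bz}\|_{\infty}\lesssim h$, the inverse inequality $\|\nabla w_1\|_{\cT_h}\lesssim h^{-1}\|w_1\|_{\cT_h}$, and the $\beta^{1/4}\|w_1\|_{\cT_h}$ piece of the norm give the contribution $\beta^{1/4}h^{k+1}|y|_{k+1}\lesssim\beta^{1/4}h^{k+1/2}|y|_{k+1}$. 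The reaction-type volume term $((\gamma-\nabla\cdot\bz)\delta_h^y,w_1)_{\cT_h}$ admits the same treatment. Finally the coupling terms $(\delta_h^y,w_2)_{\cT_h}$ and $(\delta_h^p,w_1)_{\cT_h}$ in $\cB_h$ carry no $\beta^{1/2}$ and are bounded directly by $Ch^{k+1}(|y|_{k+1}+|p|_{k+1})\cdot\|(\cdot,w,\cdot)\|_{\cT_h}$, with $\|w\|_{\cT_h}\le\|(\cdot)\|_{1,\beta}$ delivering the second term $h^{k+1}$ in the target estimate.

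The main obstacle is the bookkeeping: one must verify that every term that is not pure coupling picks up a factor of $\beta^{1/2}$ which, after pairing with a $\beta^{-1/4}$ from the energy norm, leaves exactly $\beta^{1/4}$; and that the one facial term that would otherwise spoil the rate, namely $\langle\delta_h^{\widehat y},\br\cdot\bn\rangle_{\partial\cT_h}$, is annihilated by the defining property of $P_\Lambda$ (and similarly for its $\bp$-analogue). Once this structural cancellation is recognized, collecting constants and summing the bounds yields \eqref{eq:epsih}.
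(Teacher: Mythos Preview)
Your proof is correct and follows essentially the same approach as the paper's: apply the inf-sup condition, use Galerkin orthogonality to replace the $\epsilon$-quantities by the $\delta$-quantities, exploit the projection orthogonalities to eliminate most terms of $a_{1,h}$, and handle the convective volume term by subtracting the elementwise mean of $\bz$. The only organizational difference is that the paper splits the surviving boundary terms into three pieces $R_3,R_4,R_5$ and estimates the $\langle\delta_h^{\widehat y}\bz\cdot\bn,\cdot\rangle$ contribution against $\|w_1\|_{\cT_h}$ (via the same mean-subtraction trick on $\partial\cT_h$) rather than lumping everything against the jump seminorm as you do; both routes give the same bound.
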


\begin{proof}
  Since $(\epsilon_h^{\bq},\epsilon_h^{y},\epsilon_h^{\widehat{y}})\in\bm{V}_h\times W_h\times\Lambda^0_h$ and $(\epsilon_h^{\bm{p}},\epsilon_h^{p},\epsilon_h^{\widehat{p}})\in\bm{V}_h\times W_h\times\Lambda^0_h$, it follows from Lemma \ref{lem:infsup} and \eqref{eq:go} that
  \begin{equation}\label{eq:errorinfsup}
  \begin{aligned}
   &\|(\epsilon_h^{\bq},\epsilon_h^{y},\epsilon_h^{\widehat{y}})\|_{1,\beta}+\|(\epsilon_h^{\bm{p}},\epsilon_h^{p},\epsilon_h^{\widehat{p}})\|_{1,\beta}\\
   \lesssim&\sup_{\substack{(\br_i,w_i,\mu_i)\in\bm{V}_h\times W_h\times\Lambda^0_h\\ i=1,2}}\frac{\cB_h(((\epsilon_h^{\bq},\epsilon_h^{y},\epsilon_h^{\widehat{y}}),(\epsilon_h^{\bm{p}},\epsilon_h^{p},\epsilon_h^{\widehat{p}})),((\bm{r}_1,w_1,\mu_1),(\bm{r}_2,w_2,\mu_2)))}{\|(\br_1,w_1,\mu_1)\|_{1,\beta}+\|(\br_2,w_2,\mu_2)\|_{1,\beta}}\\
   =&\sup_{\substack{(\br_i,w_i,\mu_i)\in\bm{V}_h\times W_h\times\Lambda^0_h\\ i=1,2}}\frac{\cB_h(((\delta^{\bq}_h,\delta^{y}_h,\delta^{\widehat{y}}_h),(\delta^{\bp}_h,\delta^{p}_h,\delta^{\widehat{p}}_h)),((\bm{r}_1,w_1,\mu_1),(\bm{r}_2,w_2,\mu_2)))}{\|(\br_1,w_1,\mu_1)\|_{1,\beta}+\|(\br_2,w_2,\mu_2)\|_{1,\beta}}.
  \end{aligned}
  \end{equation}
  Notice that we have the following relation by the definition of the projection operators \eqref{eq:hdgproj}, \eqref{eq:hdgprojor} and integration by parts (also see \cite[Lemma 4.5]{fu2015analysis}),
  \begin{equation*}
\begin{aligned}
&a_{1,h}((\delta^{\bq}_h,\delta^{y}_h,\delta^{\widehat{y}}_h),(\bm{r}_1,w_1,\mu_1))\\
=&(\delta^{\bq}_h,\br_1)_{\cT_h}-(\delta^{y}_h,\nabla\cdot\br_1)_{\cT_h}+\l \delta^{\widehat{y}}_h,\br_1\cdot\bn\r_{\partial\cT_h}\\
    &+(\nabla\cdot\delta^{\bq}_h ,w_1)_{\cT_h}-(\delta^{y}_h,\bm{\zeta}\cdot \nabla w_1)_{\cT_h}+((\gamma-\nabla\cdot\bm{\zeta})\delta_h^y,w_1)_{\cT_h}\\
  &+\l\delta^{\widehat{y}}_h\bz\cdot\bn +\tau_1(\delta^{y}_h-\delta^{\widehat{y}}_h),w_1\r_{\partial\cT_h}-\l(\delta^{\bq}_h+\delta^{\widehat{y}}_h\bz)\cdot\bn +\tau_1(\delta^{y}_h-\delta^{\widehat{y}}_h),\mu_1\r_{\partial\cT_h}\\
  =&-(\bm{\zeta}\delta^{y}_h,\nabla w_1)_{\cT_h}+((\gamma-\nabla\cdot\bm{\zeta})\delta_h^y,w_1)_{\cT_h}\\
  &+\l\delta^{\bq}_h\cdot\bn,w_1-\mu_1\r_{\partial\cT_h}+\l\delta^{\widehat{y}}_h\bz\cdot\bn ,w_1\r_{\partial\cT_h}+\l\tau_1\delta^{y}_h,w_1-\mu_1\r_{\partial\cT_h}\\
  :=&R_1+R_2+\ldots+R_5,
\end{aligned}
\end{equation*}
where we use Assumption \eqref{assump1} and the fact $\l\delta^{\widehat{y}}_h\bz\cdot\bn ,\mu_1\r_{\partial\cT_h}=0$. 
The terms $R_1$ to $R_5$ can be bounded as follows by the projection estimates \eqref{eq:projection}. 
 Let $\l\bm{\zeta}\r_K$ be the mean of $\bm{\zeta}$ over each $K$. Note that $\l\bm{\zeta}\r_K\cdot\nabla w_1\in\mathbb{P}^{k-1}(K)$ hence $(\delta_h^y\l\bm{\zeta}\r_K, \nabla w_1)_{\cT_h}=0$. We also have $\|\bm{\zeta}-\l\bm{\zeta}\r_K\|_{L^\infty(K)}\le C h_K$. Then it follows from a standard inverse inequality that
\begin{equation}\label{eq:R1}
\begin{aligned}
   R_1&=\big((\l\bm{\zeta}\r_K-\bm{\zeta})\delta_h^y,\nabla w_1\big)_{\cT_h}\le Ch\|\delta_h^y\|_{\cT_h}\|\nabla w_1\|_{\cT_h}\\
   &\le C\|\delta_h^y\|_{\cT_h}\|w_1\|_{\cT_h}\le Ch^{k+1}|y|_{k+1}\|w_1\|_{\cT_h}.
\end{aligned}
\end{equation}
For $R_2$, we have
\begin{equation}
  R_2\le (\|\gamma\|_\infty+|\bm{\zeta}|_{1,\infty})\|\delta_h^y\|_{\cT_h}\|w_1\|_{\cT_h}\le Ch^{k+1}|y|_{k+1}\|w_1\|_{\cT_h}.
\end{equation}
It follows from assumption \eqref{assump3} and trace inequalities that 
\begin{equation}
  \begin{aligned}
    R_3&\le \left\|\big|\tau_1-\frac12\bm{\zeta}\cdot\bn\big|^{-\frac{1}{2}}\delta_h^{\bq}\right\|_{\partial\cT_h}\left\|\big|\tau_1-\frac12\bm{\zeta}\cdot\bn\big|^\frac12(w_1-\mu_1)\right\|_{\partial\cT_h}\\
    &\le Ch^{k+\frac12}|\bm{q}|_{k+1}\left\|\big|\tau_1-\frac12\bm{\zeta}\cdot\bn\big|^\frac12(w_1-\mu_1)\right\|_{\partial\cT_h}.
  \end{aligned}
\end{equation}
The term $R_4$ can be estimated similarly as $R_1$. We have, by trace inequalities, 
\begin{equation}
\begin{aligned}
  R_4&=\l(\l\bm{\zeta}\r_K-\bm{\zeta})\cdot\bn\delta_h^{\widehat{y}},w_1\r_{\partial\cT_h}\\
  &\le Ch\|\delta_h^{\widehat{y}}\|_{\partial\cT_h}\|w_1\|_{\partial\cT_h}\\
  &\le Ch^\frac12\|\delta_h^{\widehat{y}}\|_{\partial\cT_h}\|w_1\|_{\cT_h}\\
  &\le C h^{k+1}|y|_{k+1}\|w_1\|_{\cT_h}.
\end{aligned}
\end{equation}
At last, it follows from assumptions \eqref{assump1} and \eqref{assump3} that
\begin{equation}\label{eq:R5}
  \begin{aligned}
    R_5&\le\|\tau_1^\frac12\delta_h^y\|_{\partial\cT_h}\|\tau_1^\frac12(w_1-\mu_1)\|_{\partial\cT_h}\\
    &\le C\|\delta_h^y\|_{\partial\cT_h}\left\|\big|\tau_1-\frac12\bm{\zeta}\cdot\bn\big|^\frac12(w_1-\mu_1)\right\|_{\partial\cT_h}\\
    &\le Ch^{k+\frac12}|y|_{k+1}\left\|\big|\tau_1-\frac12\bm{\zeta}\cdot\bn\big|^\frac12(w_1-\mu_1)\right\|_{\partial\cT_h}.
  \end{aligned}
\end{equation}
Combining the estimates \eqref{eq:R1}-\eqref{eq:R5}, we obtain,
\begin{equation}\label{eq:a1hproj}
  {\small
\begin{aligned}
&a_{1,h}((\delta^{\bq}_h,\delta^{y}_h,\delta^{\widehat{y}}_h),(\bm{r}_1,w_1,\mu_1))\\
  &\le Ch^{k+\frac12}(|y|_{k+1}+|\bq|_{k+1})\left(\|w_1\|_{\cT_h}+\left\|\big|\tau_1-\frac12\bm{\zeta}\cdot\bn\big|^\frac12(w_1-\mu_1)\right\|_{\partial\cT_h}\right).
\end{aligned}}
\end{equation}
Similarly, we have,
\begin{equation}\label{eq:a2hproj}
{\small
\begin{aligned}
  &a_{2,h}((\delta^{\bp}_h,\delta^{p}_h,\delta^{\widehat{p}}_h),(\bm{r}_2,w_2,\mu_2))\\
  &\le Ch^{k+\frac12}(|p|_{k+1}+|\bp|_{k+1})\left(\|w_2\|_{\cT_h}+\left\|\big|\tau_2+\frac12\bm{\zeta}\cdot\bn\big|^\frac12(w_2-\mu_2)\right\|_{\partial\cT_h}\right).
\end{aligned}}
\end{equation}
Note that $\tau_2$ may not be piecewise constant here.
  Hence, it follows from \eqref{eq:a1hproj} and \eqref{eq:a2hproj} that
\begin{equation}\label{eq:bheq}
\begin{aligned}
    &\cB_h(((\delta^{\bq}_h,\delta^{y}_h,\delta^{\widehat{y}}_h),(\delta^{\bp}_h,\delta^{p}_h,\delta^{\widehat{p}}_h)),((\bm{r}_1,w_1,\mu_1),(\bm{r}_2,w_2,\mu_2)))\\
    =&\beta^\frac12 a_{2,h}((\delta^{\bp}_h,\delta^{p}_h,\delta^{\widehat{p}}_h),(\bm{r}_2,w_2,\mu_2))+(\delta^{y}_h,w_2)_{\cT_h}\\
  &-(\delta^p_h,w_1)_{\cT_h}+\beta^\frac12a_{1,h}((\delta^{\bq}_h,\delta^{y}_h,\delta^{\widehat{y}}_h),(\bm{r}_1,w_1,\mu_1))\\
  \le&C(\beta^\frac14h^{k+\frac12}+h^{k+1})(|p|_{k+1}+|\bp|_{k+1}+|y|_{k+1}+|\bq|_{k+1})
  \\&\times(\|(\br_1,w_1,\mu_1)\|_{1,\beta}+\|(\br_2,w_2,\mu_2)\|_{1,\beta}).
\end{aligned}
\end{equation}
Therefore, we obtain the desired result by \eqref{eq:errorinfsup}, \eqref{eq:bheq} and \eqref{eq:projection}.
\end{proof}

\begin{theorem}\label{thm:hdgesti}
  Let $(({\bm{q}}_h,y_h, \widehat{y}_h),({\bp}_h, p_h, \widehat{p}_h))$ be the HDG solutions to \eqref{eq:hdg2} and let $(\bq,y)$ and $(\bp,p)$ be the solutions to \eqref{eq:fsystem}, we have the following error estimates
  \begin{equation*}
  \begin{aligned}
    &\|(\bq-\bq_h,y-y_h,y-\widehat{y}_h)\|_{1,\beta}+\|(\bp-\bp_h,p-p_h,p-\widehat{p}_h)\|_{1,\beta}\\
    \le& C\big(\beta^{\frac14}h^{k+\frac12}+h^{k+1}\big)(|p|_{k+1}+|\bp|_{k+1}+|y|_{k+1}+|\bq|_{k+1}).
  \end{aligned}
  \end{equation*}
\end{theorem}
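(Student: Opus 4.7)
The plan is to split the total error via the triangle inequality into the projection error and the discrete error between the HDG solution and the projection, and then to invoke the preceding lemma and the projection estimates \eqref{eq:projection} to bound each piece.

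First I would write, using the notation \eqref{eq:projnotations},
\begin{equation*}
    \bq-\bq_h = \delta^{\bq}_h - \epsilon^{\bq}_h, \quad y-y_h = \delta^{y}_h - \epsilon^{y}_h, \quad y-\widehat{y}_h = \delta^{\widehat{y}}_h - \epsilon^{\widehat{y}}_h,
\end{equation*}
and the analogous decompositions for the adjoint variables $(\bp,p,\widehat{p})$. By the triangle inequality,
\begin{equation*}
    \|(\bq-\bq_h,y-y_h,y-\widehat{y}_h)\|_{1,\beta} \le \|(\delta^{\bq}_h,\delta^{y}_h,\delta^{\widehat{y}}_h)\|_{1,\beta} + \|(\epsilon^{\bq}_h,\epsilon^{y}_h,\epsilon^{\widehat{y}}_h)\|_{1,\beta},
\end{equation*}
and likewise for the $(\bp,p,\widehat{p})$-block. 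The $\epsilon$-part is already bounded by the previous lemma \eqref{eq:epsih} with exactly the estimate $C(\beta^{1/4}h^{k+1/2}+h^{k+1})(|p|_{k+1}+|\bp|_{k+1}+|y|_{k+1}+|\bq|_{k+1})$ appearing in the theorem, so there is nothing further to do for this piece.

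The remaining task is to verify that the $\delta$-part satisfies the same bound. Reading off the norm from \eqref{eq:enorm}, I need to control $\beta^{1/4}\|\delta^{\bq}_h\|_{\cT_h}$, $\beta^{1/4}\|\delta^{y}_h\|_{\cT_h}$, $\|\delta^{y}_h\|_{\cT_h}$, and the boundary contribution $\beta^{1/4}\||\tau_1-\tfrac12\bm{\zeta}\cdot\bn|^{1/2}(\delta^{y}_h-\delta^{\widehat{y}}_h)\|_{\partial\cT_h}$. The first three terms are immediately of order $\beta^{1/4}h^{k+1}$ or $h^{k+1}$ by the volume estimates in \eqref{eq:projection}. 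For the boundary term, I would use Assumption \ref{assump1} (boundedness of $\tau_1$) together with $\bm{\zeta}\in [W^{1,\infty}(\Omega)]^n$ so that the weight $|\tau_1-\tfrac12\bm{\zeta}\cdot\bn|$ is bounded, and then apply the trace-type estimates $\|\delta^{y}_h\|_{\partial\cT_h}\lesssim h^{k+1/2}|y|_{k+1}$ (obtained from a standard scaled trace inequality combined with the first two bounds in \eqref{eq:projection}) and $\|\delta^{\widehat{y}}_h\|_{\partial\cT_h}\lesssim h^{k+1/2}|y|_{k+1}$ directly from \eqref{eq:projection}. This yields a contribution of order $\beta^{1/4}h^{k+1/2}|y|_{k+1}$, which is absorbed in the target bound. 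An identical argument treats $(\delta^{\bp}_h,\delta^{p}_h,\delta^{\widehat{p}}_h)$, using $\tau_2=\tau_1-\bm{\zeta}\cdot\bn$ from \eqref{assump2} to bound the weight $|\tau_2+\tfrac12\bm{\zeta}\cdot\bn|$.

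Adding the two resulting estimates and combining with \eqref{eq:epsih} gives the claim. There is no real obstacle here; the only mildly delicate point is the boundary term, where one must make sure the weight $|\tau_1-\tfrac12\bm{\zeta}\cdot\bn|^{1/2}$ is harmless, which is guaranteed by \eqref{assump1} and the $W^{1,\infty}$-regularity of $\bm{\zeta}$. The heavy lifting — namely the inf-sup argument and the estimates on $a_{1,h}((\delta^{\bq}_h,\delta^{y}_h,\delta^{\widehat{y}}_h),\cdot)$ and $a_{2,h}((\delta^{\bp}_h,\delta^{p}_h,\delta^{\widehat{p}}_h),\cdot)$ — has already been carried out in the preceding lemma, so the theorem follows in a few lines.
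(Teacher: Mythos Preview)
Your proposal is correct and follows essentially the same route as the paper: split via the triangle inequality into the projection defect $(\delta^{\bq}_h,\delta^{y}_h,\delta^{\widehat{y}}_h)$, $(\delta^{\bp}_h,\delta^{p}_h,\delta^{\widehat{p}}_h)$ and the discrete part $(\epsilon^{\bq}_h,\epsilon^{y}_h,\epsilon^{\widehat{y}}_h)$, $(\epsilon^{\bp}_h,\epsilon^{p}_h,\epsilon^{\widehat{p}}_h)$, invoke \eqref{eq:epsih} for the latter, and bound the former directly from \eqref{eq:enorm} and \eqref{eq:projection}. The paper states this more tersely (without spelling out the boundary-term argument), but the content is the same; note also that the weight in $\|\cdot\|_{1,\beta}$ is always $|\tau_1-\tfrac12\bm{\zeta}\cdot\bn|^{1/2}$ for both blocks, so your detour through $\tau_2+\tfrac12\bm{\zeta}\cdot\bn$ is unnecessary (though harmless, since Assumption~\eqref{assump2} makes them equal).
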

\begin{proof}
  First, it follows from \eqref{eq:enorm} and the approximation properties of the projection operators that,
  \begin{equation}\label{eq:deltah}
  \begin{aligned}
        &\|(\delta^{\bq}_h,\delta^{y}_h,\delta^{\widehat{y}}_h)\|_{1,\beta}+\|(\delta^{\bp}_h,\delta^{p}_h,\delta^{\widehat{p}}_h)\|_{1,\beta}\\
    \le& C\big(\beta^{\frac14}h^{k+\frac12}+h^{k+1}\big)(|p|_{k+1}+|\bp|_{k+1}+|y|_{k+1}+|\bq|_{k+1}).
  \end{aligned}
  \end{equation}
By \eqref{eq:deltah}, \eqref{eq:epsih} and triangle inequality, we obtain
\begin{equation*}
\begin{aligned}
   &\|(\bq-\bq_h,y-y_h,y-\widehat{y}_h)\|_{1,\beta}+\|(\bp-\bp_h,p-p_h,p-\widehat{p}_h)\|_{1,\beta}\\
    \le& C\big(\beta^{\frac14}h^{k+\frac12}+h^{k+1}\big)(|p|_{k+1}+|\bp|_{k+1}+|y|_{k+1}+|\bq|_{k+1}).
    \end{aligned}
  \end{equation*}
\end{proof}

\begin{remark}
  For sufficiently smooth solutions, Theorem \ref{thm:hdgesti} states that the convergence rate in the $\|\cdot\|_{1,\beta}$ norm is $O(h^{k+\frac12})$ when $\beta=O(1)$, while it is $O(h^{k+1})$ when $\beta=O(h^2)$. However, if the right-hand sides of \eqref{eq:gsaddle} are only in $L_2(\Omega)$, as discussed in Remark \ref{rem:regularity}, the regularity of the solutions depends on $\beta$. In this case, one would expect to see worsened convergence rates in $\|\cdot\|_{1,\beta}$ for small $\beta$ with a coarse mesh. Therefore, a very fine mesh would be required to obtain meaningful solutions (cf. \cite{brenner2020multigrid}). Consequently, a robust preconditioner or a fast solver is necessary to expedite the solving process.
\end{remark}

\section{A BDDC Preconditioner}\label{sec:bddc}

In this section, we introduce a BDDC preconditioner to solve the discrete problem \eqref{eq:hdg2}. To achieve this, we first write the discrete problem in an operator form and provide several preliminary results. For simplicity, we only consider the two-dimensional case.

\subsection{Operator form}

We define the following operators, for any $\bm{r}_1\in \bm{V}_h$, $w_1, w_2 \in W_h $, and $\mu_1, \mu_2\in \Lambda^0_h$,
\begin{equation*}
  \begin{aligned}
&(\cA {\qb}_h,\bm{r}_1)_{\cT_h}=-\beta^{\frac12}({\qb}_h,\bm{r}_1)_{\cT_h},\quad(\cB\bm{r}_1,\ty_h)_{\cT_h}=\beta^{\frac12}(\ty_h,\nabla\cdot\bm{r}_1)_{\cT_h},\\
&\l\cC\bm{r}_1 ,\widehat{y}_h\r_{\cT_h}=-\beta^{\frac12}\l\widehat{y}_h,\bm{r}\cdot\bm{n}\r_{\partial\cT_h},\\
&(\mathcal{R}_1 {\ty}_h,{w}_1)_{\cT_h} =-\beta^{\frac12}(\ty_h,\bz\cdot\nabla w_1)_{\cT_h}+\beta^{\frac12}((\gamma-\nabla\cdot\bm{\zeta})\ty_h,w_1)_{\cT_h}+\beta^{\frac12}\l\tau_1\ty_h,w_1\r_{\partial\cT_h}, \\
&(\mathcal{R}_4p_h,w_2)_{\cT_h}=\beta^{\frac12}(p_h,\bz\cdot\nabla w_2)_{\cT_h}+\beta^{\frac12}(\gamma p_h,w_2)_{\cT_h}+\beta^{\frac12}\l\tau_2p_h,w_2\r_{\partial\cT_h},\\
&(\mathcal{R}_2p_h,w_1)_{\cT_h}=-(p_h,w_1)_{\cT_h},\quad(\mathcal{R}_3y_h,w_2)_{\cT_h}=(y_h,w_2)_{\cT_h},\\
&(\cS_1\widehat{y}_h,w_1)_{\cT_h}=\beta^{\frac12}\l(\bz\cdot\bm{n}-\tau_1)\widehat{y}_h,w_1\r_{\partial\cT_h},\\
&\l\cS_2\widehat{p}_h,w_2\r_{\partial\cT_h}=-\beta^{\frac12}\l(\bz\cdot\bm{n}+\tau_2)\widehat{p}_h,w_2\r_{\partial\cT_h},\\
&\l\cS_3y_h,\mu_1\r_{\partial\cT_h}=-\beta^{\frac12}\l\tau_1y_h,\mu_1\r_{\partial\cT_h}, \quad \l\cS_4p_h,\mu_2\r_{\partial\cT_h}=-\beta^{\frac12}\l\tau_2p_h,\mu_2\r_{\partial\cT_h},\\
&\l\cT_1\widehat{y}_h,\mu_1\r_{\partial\cT_h}=-\beta^{\frac12}\l(\bz\cdot\bm{n}-\tau_1)\widehat{y}_h,w_1\r_{\partial\cT_h},\\
&\l\cT_2\widehat{y}_h,\mu_2\r_{\partial\cT_h}=\beta^{\frac12}
\l(\bz\cdot\bm{n}+\tau_2)\widehat{p}_h,\mu_2)\r_{\partial\cT_h}.
\end{aligned}
\end{equation*}
Consequently, the HDG method \eqref{eq:hdg2} can be written as the following operator form, which is useful for the description of the BDDC algorithm,
\begin{equation}\label{equation:Amatrix}
\begin{aligned}
\begin{pmatrix}
\mathcal{A} & 0 & \mathcal{B}^T & 0&\mathcal{C}^T &0\\
0& \mathcal{A}  & 0 & \mathcal{B}^T& 0&\mathcal{C}^T\\
\cB & 0 & \mathcal{R}_1 & \mathcal{R}_2 & \cS_1 &0\\
0 & \cB &\mathcal{R}_3 &\mathcal{R}_4 &0 & S_2\\
\cC&0 & \cS_3 &  0 & \cT_1& 0\\
0&\cC &  0 &S_4   &0 & \cT_2
\end{pmatrix}
\begin{pmatrix}
\bm{q}_h\\
\bm{p}_h\\
y_h\\
p_h\\
\widehat{y}_h\\
\widehat{p}_h
\end{pmatrix}
=\begin{pmatrix}
0\\
0\\
g\\
f\\
0\\
0
\end{pmatrix}.
\end{aligned}
\end{equation}

\subsection{A characterization of the HDG Method}

The operator form $\eqref{equation:Amatrix}$ can be rewritten as the following matrix-vector form:
\begin{align}\label{Aqu}
\begin{pmatrix}
{A}_{\bm{q}\bm{q}}&\bm{0}&{A}^T_{y\bm{q}}&\bm{0}&{A}^T_{\widehat{y}\bm{q}} &\bm{0}\\
\bm{0}&{A}_{\bm{p}\bm{p}}&\bm{0}& {A}^T_{p\bm{p}}&
\bm{0} & {A}^T_{\widehat{p}\bm{p}}\\
{A}_{y\bm{q}}&\bm{0}&{A}_{yy} &{A}_{yp}&{A}_{y\widehat{y}}&\bm{0}\\
\bm{0}&{A}_{p\bm{p}}&{A}_{py}&{A}_{pp} &\bm{0}&{A}_{p\widehat{p}}\\
{A}_{\widehat{y}\bm{q}} &\bm{0}& {A}_{\widehat{y}{\ty}}&\bm{0}& A_{\widehat{y}\widehat{y}} &\bm{0}\\
\bm{0}& {A}_{\widehat{p}\bm{p}}&\bm{0}&A_{\widehat{p}p}&\bm{0}&A_{\widehat{p}\widehat{p}} 
\end{pmatrix}
\begin{pmatrix}
\bm{q}_c\\
\bm{p}_c\\
y_c\\
p_c\\
\widehat{y}_c\\
\widehat{p}_c
\end{pmatrix}
=\begin{pmatrix}
\bm{0}\\
\bm{0}\\
\bm{G}\\
\bm{F}\\
\bm{0}\\
\bm{0}
\end{pmatrix},
\end{align}
where $\bm{F}$ and $\bm{G}$ are vectors of the coefficients that represent $f$ and $g$ under the corresponding polynomial basis. Similarly, the vector $(\bm{q}_c, \bm{p}_c, y_c, p_c, \widehat{y}_c, \widehat{p}_c)$ contains the coefficients of $(\bm{q}_h,\bm{p}_h,y_h,p_h,\widehat{y}_h,\widehat{p}_h)$ under the corresponding polynomial basis. The matrices $A$ in \eqref{Aqu} with double indices represent the corresponding operators in \eqref{equation:Amatrix}, where each index specifies the corresponding basis being used.
\vspace{0.3cm}

Furthermore, we define $\bm{L}=\begin{pmatrix}
\bm{q}_c\\
\bm{p}_c
\end{pmatrix}$, $\bm{u}=\begin{pmatrix}
y_c\\
p_c
\end{pmatrix}$, $\bm{\lambda}=\begin{pmatrix}
\widehat{y}_c\\
\widehat{p}_c
\end{pmatrix}$ and $\bm{\bm{Z}}=\begin{pmatrix}
\bm{G}\\
\bm{F}
\end{pmatrix}$. We then decompose the system matrix in \eqref{Aqu} as follow. Let
\begin{equation*}
\begin{alignedat}{3}
  &\A_{\bm{L}\bm{L}}=\begin{pmatrix}
{A}_{\bm{q}\bm{q}}&\bm{0}\\
\bm{0}&{A}_{\bm{p}\bm{p}}
\end{pmatrix}\quad&&\A_{\bm{u}\bm{L}}=\begin{pmatrix}
{A}_{y\bm{q}}&\bm{0}\\
\bm{0}&{A}_{p\bm{p}}\\
\end{pmatrix} \quad&&\A_{\lambdae\bm{L}}=\begin{pmatrix}
{A}_{\widehat{y}{\qb}} &\bm{0}\\
\bm{0}& {A}_{\widehat{p}\bm{p}}
\end{pmatrix}\\
&\A_{{\u}{\u}}=\begin{pmatrix}
{A}_{yy} &{A}_{yp}\\
{A}_{py}&{A}_{pp} 
\end{pmatrix}\quad &&\A_{\bm{u}\bm{\lambda}}=\begin{pmatrix}
{A}_{y\widehat{y}}&0\\
0&{A}_{p\widehat{p}}\\
\end{pmatrix} \quad&&\A_{\lambdae\bm{u}}=\begin{pmatrix}
 A_{\widehat{y}y} & 0\\
0&A_{\widehat{p}p}
\end{pmatrix}\\
&\A_{\lambdae\lambdae}=\begin{pmatrix}
A_{\widehat{y}\widehat{y}} &0\\
0&A_{\widehat{p}\widehat{p}} 
\end{pmatrix},
\end{alignedat}
\end{equation*}
then \eqref{Aqu} can be written as 
\begin{align}\label{ALu}
\begin{pmatrix}
\A_{\L\L} &\A^T_{\u\L} & \A_{\lambdae\L}^T\\
\A_{\u\L} &\A_{\u\u} &\A_{\u\lambdae}\\
\A_{\lambdae\L}&\A_{\lambdae\u}&\A_{\lambdae\lambdae}
\end{pmatrix}
\begin{pmatrix}
\L\\
\u\\
\lambdae
\end{pmatrix}=\begin{pmatrix}
\bm{0}\\
\bm{Z}\\
\bm{0}
\end{pmatrix}.
\end{align}
We can eliminate ${\L}$ and $\u$ in each element independently from \eqref{ALu} and obtain a system for 
$\lambdae$ as
\begin{align}\label{Ab}
\A\lambdae={\bm b},
\end{align}
where 
\begin{align*}
\A=\A_{\lambdae\lambdae}-
\begin{pmatrix}
\A_{\lambdae\L}&\A_{\lambdae\u}
\end{pmatrix}\begin{pmatrix}
\A_{\L\L}&\A^{T}_{\u\L}\\
\A_{\u\L}&\A_{\u\u}
\end{pmatrix}^{-1}
\begin{pmatrix}
\A^T_{\lambdae\L}\\
\A_{\u\lambdae}
\end{pmatrix}
\end{align*}
and 
\begin{align*}
\bm{b}=-
\begin{pmatrix}
\A_{\lambdae\L}&\A_{\lambdae\u}
\end{pmatrix}
\begin{pmatrix}
\A_{\L\L} &\A^{T}_{\u\L}\\
\A_{\u\L}&\A_{\u\u}
\end{pmatrix}^{-1}
\begin{pmatrix}
\bm{0}\\
\bm{Z}
\end{pmatrix}.
\end{align*}
We will solve for $\lambdae$ based on \eqref{Ab}. Then ${\L},{\u}$ can be recovered in each element $K$ with $\lambdae$ on $\partial K$ using \eqref{ALu}.

\begin{remark}
  One of the key features of HDG is the use of a technique known as static condensation, which reduces the problem \eqref{ALu} into \eqref{Ab}. This approach allows us to solve the discrete system on the skeleton, significantly reducing the degrees of freedom. For further details, we refer the reader to \cite{fu2015analysis,MR2429868,chen2018hdg}.
\end{remark}

\subsection{Domain decomposition and a reduced subdomain interface problem}
In this subsection, we briefly discuss the construction of a reduced subdomain interface problem that we aim to solve. Similar procedure can be found in \cite{TZAD2021,TWZstokes2020}. 

We first decompose the domain $\Omega$ into $N$ non-overlapping subdomains $\Omega_i (i=1,2,\cdots,N)$ and denote the diameter of each subdomain as $H_i$. Let $H=\max\limits_{i}H_i$ and $\Gamma=\cup\partial\Omega^{(i)}\backslash\partial\Omega$ be the subdomain interface.
Denote $\widehat{\Lambdae}_\Gamma:=\{\lambdae_\Gamma\}$ as the set of degrees of freedom on the subdomain interface $\Gamma$ and $\Lambdae^{(i)}_I$ as the set of degrees of freedom in the interior of each subdomain. We denote $\Lambdae_I=\bigoplus_{i=1}^N \Lambdae^{(i)}_I:=\{\lambdae_I\}$. See Figure \ref{figure:lambdagi} for an illustration. Then, we have that
\begin{align*}
\Lambdae=\Lambdae_I\bigoplus \widehat{\Lambdae}_\Gamma.
\end{align*}

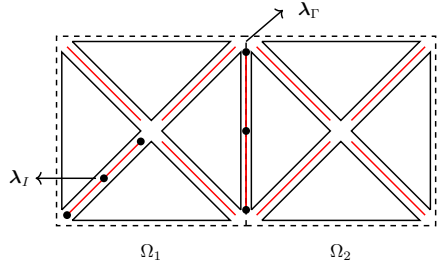
\begin{figure}[!h]
\centering
\scalebox{0.7}{
\begin{tikzpicture}

 \draw[thick] plot coordinates {(0,-0.2) (1.5,1.3) (3,-0.2) (0,-0.2)};
  \draw[thick] plot coordinates {(3.2,0) (3.2,3) (1.7,1.5)(3.2,0)};
 \draw[thick] plot coordinates {(0,3.2)(3,3.2) (1.5,1.7)(0,3.2)};
 \draw[thick] plot coordinates {(-0.2,0)(-0.2,3) (1.3,1.5)(-0.2,0)};

\draw[thick] plot coordinates {(3.6,-0.2) (5.1,1.3) (6.6,-0.2) (3.6,-0.2)};
  \draw[thick] plot coordinates {(6.8,0) (6.8,3) (5.3,1.5)(6.8,0)};
 \draw[thick] plot coordinates {(3.6,3.2)(6.6,3.2) (5.1,1.7)(3.6,3.2)};
 \draw[thick] plot coordinates {(3.4,0)(3.4,3) (4.9,1.5)(3.4,0)};

 \draw[thick,red] plot coordinates {(1.7,1.7) (3.1,3.1)};
 \draw[thick,red] plot coordinates {(1.3,1.3) (-0.1,-0.1)};
 \draw[thick,red] plot coordinates {(1.7,1.3) (3.1,-0.1)};
 \draw[thick,red] plot coordinates {(1.3,1.7) (-0.1,3.1)};

 \draw[thick,red] plot coordinates {(5.3,1.7) (6.7,3.1)};
 \draw[thick,red] plot coordinates {(4.9,1.3) (3.5,-0.1)};
 \draw[thick,red] plot coordinates {(5.3,1.3) (6.7,-0.1)};
 \draw[thick,red] plot coordinates {(4.9,1.7) (3.5,3.1)};

 \draw[thick,red] plot coordinates {(3.3,0) (3.3,3)};


  \draw[thick,dashed] plot coordinates {  (3.3, 3.3) (-0.3,3.3)(-0.3,-0.3) (3.3, -0.3)};

  \draw[thick,dashed] plot coordinates {  (3.3, -0.3)  (3.3, 3.3)};

  \draw[thick,dashed] plot coordinates {(3.3, -0.3) (6.9, -0.3) (6.9, 3.3) (3.3, 3.3)};

 \draw[thick,red] plot coordinates {(3.3,0) (3.3,3)};

\node at (1.5,-0.8) {$\Omega_1$};
\node at (5.1,-0.8) {$\Omega_2$};

\node at (3.3,0) [circle,fill,inner sep=1.5pt]{};
\node at (3.3,3) [circle,fill,inner sep=1.5pt]{};
\node at (3.3,1.5) [circle,fill,inner sep=1.5pt]{};

\node at (1.3,1.3) [circle,fill,inner sep=1.5pt]{};
\node at (-0.1,-0.1) [circle,fill,inner sep=1.5pt]{};
\node at (0.6,0.6) [circle,fill,inner sep=1.5pt]{};

\draw[->,thick] (3.3,3.2)--(4,3.8);
\node at (4.5,3.8) {$\bm{\lambda}_\Gamma$};

\draw[->,thick] (0.45,0.6)--(-0.7,0.6);
\node at (-1.0,0.6) {$\bm{\lambda}_I$};

\end{tikzpicture}}
\caption{The degree of freedoms $\lambdae_{\Gamma}$ and $\lambdae_{I}$} \label{figure:lambdagi}
\end{figure}

The original global problem \eqref{Ab} can then be written as
\begin{align*}
\begin{pmatrix}
{\A}_{II}& {\A}_{I\Gamma}\\
{\A}_{\Gamma I} &{\A}_{\Gamma\Gamma}
\end{pmatrix}
\begin{pmatrix}
\lambdae_I\\
\lambdae_\Gamma
\end{pmatrix}=\begin{pmatrix}
{\bf b}_I\\
{\bf b}_\Gamma
\end{pmatrix}.
\end{align*}
Therefore, for each subdomain $\Omega_i$, the subdomain problem can be written as 
\begin{align}\label{Asubdomain}
\begin{pmatrix}
{\A}^{(i)}_{II}& {\A}^{(i)}_{I\Gamma}\\
{\A}^{(i)}_{\Gamma I} &{\A}^{(i)}_{\Gamma\Gamma}
\end{pmatrix}
\begin{pmatrix}
\lambdae^{(i)}_I\\
\lambdae^{(i)}_\Gamma
\end{pmatrix}=\begin{pmatrix}
{\bf b}^{(i)}_I\\
{\bf b}^{(i)}_\Gamma
\end{pmatrix}.
\end{align}

By \eqref{Asubdomain}, we can define the subdomain local Schur complement ${\bf S}^{(i)}_\Gamma$ as follows:
\begin{align}\label{eq:schurSi}
{\S}^{(i)}_\Gamma\lambdae^{(i)}_\Gamma={\g}^{(i)}_\Gamma,
\end{align}
where
\begin{align*}
{\S}^{(i)}_\Gamma={\A}^{(i)}_{\Gamma\Gamma}-{\A}^{(i)}_{\Gamma I}{\A}^{(i)^{-1}}_{II}{\A}^{(i)}_{I\Gamma},\quad {\g}^{(i)}_\Gamma={\bf b}^{(i)}_\Gamma-{\A}^{(i)}_{\Gamma I}{\A}^{(i)^{-1}}_{II}{\bf b}^{(i)}_I.
\end{align*}
Denote $\R^{(i)}_\Gamma$ as the restriction operator from $\widehat{\Lambdae}_\Gamma$ to $\Lambdae^{(i)}_\Gamma,$ where $\Lambdae^{(i)}_\Gamma$ is the subdomain local interface space. Assemble the subdomain local Schur complement ${\S}^{(i)}_\Gamma$, we can obtain the global Schur interface problem: find  $\lambdae_\Gamma\in\widehat{\Lambdae}_{\Gamma}$ such that 
\begin{align} \label{Ginterface}
\widehat{\S}_\Gamma\lambdae_\Gamma &= {\bf g}_\Gamma,
\end{align}
where 
\begin{align*}
\widehat{\S}_\Gamma=\sum\limits_{i=1}^N \R^{(i)^T}_{\Gamma}{\S}^{(i)}_\Gamma \R^{(i)}_\Gamma,\qquad {\g}_\Gamma=\sum\limits_{i=1}^N{\R}^{(i)^T}_\Gamma{\g}^{(i)}_\Gamma.
\end{align*}
Here $\widehat{\S}_\Gamma$ is the global Schur complement defined on $\widehat{\Lambdae}_\Gamma$.  
\subsection{A BDDC preconditioner}
We decompose our domain into quadrilateral subdomains. 
Let $\widehat{\bm\Lambda}_{\Pi}:=\{\lambdae_\Pi\}$ be the coarse level, primal interface space which is continuous across the subdomain interface. The remaining subdomain degrees of freedom are denoted as ${\bm\Lambda}_{\Delta}:=\{\lambdae_{\Delta}\}$ which are discontinuous across the subdomain interface. Moreover, the space ${\bm\Lambda}_\Delta$  can be written as the direct sum of ${\bm\Lambda}^{(i)}_{\Delta}$ which are discontinuous across the subdomain interface and have a mean value on the subdomain edge/face.

We introduce a partially assembled interface space $\widetilde{\bm\Lambda}_\Gamma$ defined as
\begin{align*}
\widetilde{\bm\Lambda}_\Gamma=\widehat{\bm\Lambda}_{\Pi}\bigoplus{\bm\Lambda}_{\Delta}=
\widehat{\bm\Lambda}_{\Pi}\bigoplus(\prod\limits_{i=1}^N{\bm\Lambda}^{(i)}_{\Delta}).
\end{align*}

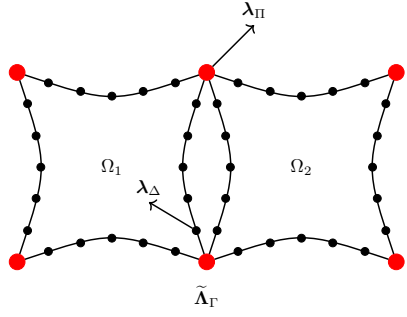
\begin{figure}[h!]
\centering
\scalebox{0.7}{
\begin{tikzpicture}[scale=1.8, node distance=0.67cm]

\tikzstyle{node} = [circle, draw=black, fill=black, inner sep=1.5pt]
\tikzstyle{special node} = [circle, draw=red, fill=red, inner sep=3pt]

\node[special node] (A) at (0, 2) {};
\node[node] (p1) at (0.33, 1.89) {};
\node[node] (p2) at (0.67, 1.8) {};
\node[node] (p3) at (1, 1.75){};
\node[node] (p4) at (1.33, 1.8){};
\node[node] (p5) at (1.67, 1.89){};
\node[special node] (D) at (2, 2) {};

\node[node] (p6) at (2.33, 1.89) {};
\node[node] (p7) at (2.67, 1.8) {};
\node[node] (p8) at (3, 1.75){};
\node[node] (p9) at (3.33, 1.8){};
\node[node] (p10) at (3.67, 1.89){};
\node[special node] (E) at (4, 2) {};

\node[node] (p11) at (0.11, 1.67) {};
\node[node] (p12) at (0.2, 1.33) {};
\node[node] (p13) at (0.25, 1){};
\node[node] (p14) at (0.2, 0.67){};
\node[node] (p15) at (0.11, 0.33){};
\node[special node] (I) at (0, 0) {};

\node[node] (p16) at (0.33, 0.11) {};
\node[node] (p17) at (0.67, 0.2) {};
\node[node] (p18) at (1, 0.25) {};
\node[node] (p19) at (1.33, 0.2) {};
\node[node] (p20) at (1.67, 0.11){};
\node[special node] (L) at (2, 0) {};

\node[node] (p21) at (2.33, 0.11) {};
\node[node] (p22) at (2.67, 0.2) {};
\node[node] (p23) at (3, 0.25) {};
\node[node] (p24) at (3.33, 0.2) {};
\node[node] (p25) at (3.67, 0.11){};
\node[special node] (M) at (4, 0) {};

\node[node] (p26) at (3.89, 1.67) {};
\node[node] (p27) at (3.8, 1.33) {};
\node[node] (p28) at (3.75, 1){};
\node[node] (p29) at (3.8, 0.67) {};
\node[node] (p30) at (3.89, 0.33){};

\node[node] (p31) at (1.75, 1) {};
\node[node] (p32) at (2.25, 1) {};

\node[node] (p33) at (1.8, 1.33) {};
\node[node] (p34) at (2.2, 1.33) {};

\node[node] (p35) at (1.89, 1.67) {};
\node[node] (p36) at (2.11, 1.67) {};

\node[node] (p37) at (1.8, 0.67) {};
\node[node] (p38) at (2.2, 0.67) {};

\node[node] (p39) at (1.89, 0.33) {};
\node[node] (p40) at (2.11, 0.33) {};

\draw[thick] (D) .. controls (1.67, 1) .. (L);
\draw[thick] (D) .. controls (2.33, 1) .. (L);

\draw[thick] (A) .. controls (0.33, 1) .. (I);
\draw[thick] (E) .. controls (3.67, 1) .. (M);

\draw[thick] (A) .. controls (1, 1.67) .. (D);
\draw[thick] (I) .. controls (1, 0.33) .. (L);

\draw[thick] (D) .. controls (3, 1.67) .. (E);
\draw[thick] (L) .. controls (3, 0.33) .. (M);

\node at (1, 1) {\(\Omega_1\)};
\node at (3, 1) {\(\Omega_2\)};

\draw[->, thick] (D) -- ++(0.5, 0.5) node[anchor=south] {\(\lambdae_{\Pi}\)};
\draw[->, thick] (p39) -- ++(-0.5, 0.3) node[anchor=south] {\(\lambdae_{\Delta}\)};

\node[below=0.33cm] at (2, 0) {$\widetilde\Lambdae_\Gamma$};

\end{tikzpicture}}
\caption{The space $\widetilde{\bm{\Lambda}}_\Gamma$}
\end{figure}

In order to introduce the BDDC preconditioner, we first introduce several operators. Let $\R^{(i)}_{\Delta}$  be the map from space $\widehat{\Lambdae}_\Gamma$ to $\Lambdae^{(i)}_{\Delta}$; $\overline{\bm R}^{(i)}_\Gamma$ be the restriction operator from $\widetilde{\Lambdae}_\Gamma$ to $\Lambdae^{(i)}_\Gamma$ and 
$\overline{\bm R}_\Gamma$ is the direct sum of $\R^{(i)}_{\Gamma}.$
Then  we can define a scaling factor $\delta^{\dagger}_i(x)$. Let ${\bm D}^{(i)}$ be the diagonal matrix with $\delta^{\dagger}_i(x)$ on its diagonal. Multiply ${\bm D}^{(i)}$ by $\R^{(i)}_{\Delta}$, we obtain the operator $\R^{(i)}_{\bm D,\Delta}$. Let
$\R_{\Gamma\Pi}$ be the map from $\widehat{\Lambdae}_\Gamma$ to $\widehat{\Lambdae}_{\Pi}$, then 
$\widetilde{\R}_{\D,\Gamma}$ can be defined as
the direct sum of $\R_{\Gamma\Pi}$ and $\R^{(i)}_{D,\Delta}$.
There are multiple scaling factor options \cite{Widlund:2020:BDDC,ZT:2016:Darcy} as long as the following conditions are obtained
\begin{align*}
\widetilde{\R}^T_{\bm D,\Gamma}\widetilde{\R}_{\Gamma}=\widetilde{\R}^T_{\Gamma}\widetilde{\R}_{\bm D,\Gamma}=I.
\end{align*}

Here we choose the simple scaling factor defined as
$$\delta^{\dagger}_i(x)=\frac1{card(I_x)},x\in \partial\Omega_{i,h}\cap\Gamma_h,$$
where $I_x$ is the indices set of subdomains which node $x$ belongs to and $card(I_x)$ is the counting number of such subdomains in $I_x$. 

Let ${\S}_\Gamma$ be the direct sum of the subdomain local Schur complement  $\S^{(i)}_\Gamma$,
the partially assembled interface Schur complement is defined as
\begin{align}\label{Sgamma}
\widetilde{\S}_\Gamma=\overline{\R}_\Gamma^T{\S}_\Gamma\overline{\R}_\Gamma.
\end{align}
With  $\widetilde{\S}_\Gamma$ defined in \eqref{Sgamma}, we define the BDDC preconditioner as  
\begin{equation}\label{BDDC}
\bm{M}^{-1}=\widetilde{\R}^T_{\bm D,\Gamma}\widetilde{\S}^{-1}_\Gamma
\widetilde{\R}_{\bm D,\Gamma}.
\end{equation}
Apply the BDDC preconditioner to the global interface problem \eqref{Ginterface}, we have
\begin{align}\label{bddc}
\widetilde{\R}^T_{\bm D,\Gamma}\widetilde{\S}^{-1}_\Gamma\widetilde{\R}_{\bm D,\Gamma}\widehat{\S}_\Gamma\lambdae_\Gamma=\widetilde{\R}^T_{\bm D,\Gamma}\widetilde{\S}^{-1}_\Gamma\widetilde{\R}_{\bm D,\Gamma}{\g}_\Gamma.
\end{align}
Since the system \eqref{Ginterface} is non-symmetric, we use GMRES to solve \eqref{bddc}. For each iteration, it will require solving the subdomain Dirichlet boundary value problem, subdomain Robin boundary value problem, and a coarse level problem (cf. \cite{Li:2004:FBC}). 

 For the BDDC preconditioner, we employ the subdomain edge average constraints in $\widehat{\bm \Lambda}_{\Pi}$ as the coarse level primal constrain such that for two adjacent subdomains $\Omega_i$ and $\Omega_j$ that shares the same edge $\mathcal{E}_{ij}$
\begin{equation}\label{constrain}
\int_{\mathcal{E}_{ij}} \lambdae^{(i)}_\Gamma ds
\end{equation}
are the same. 

{\begin{remark}
Edge average constraints are used as the primal constraints in the BDDC algorithm (cf. \cite{Li:2004:FBC}) in this paper, ensuring continuity across the subdomain vertices. Additional constraints can be introduced to accelerate the convergence of the left preconditioned GMRES algorithm and improve the performance (see \cite{TZAD2021} and \cite{TuLi:2008:BDDCAD}). 
\end{remark}}

\subsection{Overall BDDC Algorithm}

In our preconditioned BDDC algorithm, we first obtain $\lambdae_\Gamma$ by solving the global interface problem \eqref{bddc}, then we recover $\lambdae^{(i)}_I$ inside each subdomain $\Omega_i$ by \eqref{Asubdomain}.
After recovering $\lambdae^{(i)}_I$, we can obtain $\lambdae^{(i)}$ on the edge of each element in subdomain $\Omega_i$. Finally, by multiplying $-\beta^{\frac12}$ to equations \eqref{eq:qK} and \eqref{eq:pK}, together with equations \eqref{eq:yK} and \eqref{eq:pKu}, we obtain $(\bm{L}^{(i)},\bm{u}^{(i)})$ as the solution to the subdomain local problem
\begin{align}
\begin{pmatrix}
\A^{(i)}_{\L\L} &{\A^{(i)}_{\u\L}}^T & {\A^{(i)}_{\lambdae\L}}^T\\
{\A^{(i)}_{\u\L}} &\A^{(i)}_{\u\u} &\A^{(i)}_{\u\lambdae}\\
\end{pmatrix}
\begin{pmatrix}
\L^{(i)}\\
\u^{(i)}\\
\lambdae^{(i)}
\end{pmatrix}=\begin{pmatrix}
\bm{0}\\
\bm{Z}^{(i)}\\
\end{pmatrix},
\end{align}
where the matrices and vectors with superscript $(i)$ are obtained by restricting those matrices and vectors in the subdomain $\Omega_i$. A detailed description of the BDDC preconditioner is provided in Algorithm \ref{algo:bddchdg}.

\begin{algorithm}
\caption{BDDC preconditioned Algorithm \eqref{bddc}}\label{algo:bddchdg}
\begin{algorithmic}[1]
\FOR{each subdomain $\Omega_i$}
    \STATE Generate $\A^{(i)}_{\alpha\beta}$, where $\A^{(i)}_{\alpha\beta}$ are the restrictions of matrices appearing in \eqref{Aqu} in subdomain $\Omega_i$. 
 \STATE Generate  $\A^{(i)}$, where 
    ${\scriptstyle \A^{(i)}=\A^{(i)}_{\lambdae\lambdae}-}\begin{pmatrix}
\A^{(i)}_{\lambdae\L}&\A^{(i)}_{\lambdae\u}
\end{pmatrix}\begin{pmatrix}
\A^{(i)}_{\L\L}&{\A^{(i)}_{\u\L}}^T\\
{\A^{(i)}_{\u\L}}&\A^{(i)}_{\u\u}
\end{pmatrix}^{-1}
\begin{pmatrix}
{\A^{(i)}_{\lambdae\L}}^T\\
\A^{(i)}_{\u\lambdae}
\end{pmatrix}.$
     
\STATE Employ the subdomain Dirichlet boundary condition when $\partial\Omega_i\cap\partial\Omega  \neq \emptyset$ and employ the  subdomain Robin boundary condition to make subdomain local problem solvable, i.e 
\begin{align*}
&\l\cT_1\widehat{y}_h,\mu_1\r_{\partial\cT_h(\Omega_i)}=\l\cT_1\widehat{y}_h,\mu_1\r_{\partial\cT_h(\Omega_i)}+\frac12\langle\bm{\zeta}\cdot\bn\widehat{y}_h, \mu_1\rangle_{\partial\cT_h(\Omega_i)},\\
&\l\cT_2\widehat{p}_h,\mu_2\r_{\partial\cT_h(\Omega_i)}=\l\cT_2\widehat{p}_h,\mu_2\r_{\partial\cT_h(\Omega_i)}-\frac12\langle\bm{\zeta}\cdot\bn  \widehat{p}_h,\mu_2\rangle_{\partial\cT_h(\Omega_i)}
\end{align*}
and $\A^{(i)}_{\widehat{y}\widehat{y}}, \A^{(i)}_{\widehat{p}\widehat{p}}, \A^{(i)}_{\lambdae\lambdae}$ are  modified accordingly. 
\STATE Enforce edge average constraint \eqref{constrain} on each subdomain interface $\partial\Omega_i$.
\ENDFOR
\STATE Initialize $\lambdae_\Gamma$ on the coarse mesh.
\STATE Generate the global interface problem \eqref{Ginterface}  by assembling $\S^{(i)}_\Gamma$ in \eqref{eq:schurSi} in each domain $\Omega_i$.
\STATE Generate the BDDC preconditioner \eqref{BDDC}.
\STATE Use GMRES solver with BDDC preconditioner to solve for $\lambdae_\Gamma$.
\STATE Map $\lambdae_\Gamma$ onto each subdomain $\Omega_i$ to recover $\lambdae^{(i)}_I$ and subsequently  recover $\L^{(i)},\u^{(i)}$ within each subdomain $\Omega_i$. 
\end{algorithmic}
\end{algorithm}

\section{Numerical Results}\label{sec:numerics}
In this section, we present three numerical examples in two dimensions to illustrate our theoretical results. We solve the discrete problem $\eqref{eq:hdg1}$ with $k=1$ and $2$ in $\Omega=[0,1]\times[0,1]$. 
The stabilization parameters in $\eqref{eq:hdg1}$ are defined as  $$\tau_1=\max(\sup\limits_{{x\in\mathcal{E}}}(\zeta\cdot{\bf n}),0)+1,\quad\forall
\mathcal{E}\subset K, \forall K\in\mathcal{T}_h$$ and $\tau_2=\tau_1-\bm{\zeta}\cdot\bn$ according to assumption \eqref{assump2}.

For BDDC algorithm, we decompose the domain $\Omega$ into quadrilateral subdomains with meshsize $H$, where each subdomain consists of triangles in $\cT_h$ with meshsize $h$. For all the convergence rates, we let $h=6^{-1}\cdot2^{-l-1}$ at level $l$ due to the setting $H/h=6$. We use GMRES to solve the system and consider the error between the exact solution $(\bq,y,\bp,p)$ to \eqref{Osystem} and the HDG solution $(\bq_h,\bp_h,y_h,p_h,\widehat{y}_h,\widehat{p}_h)$ to \eqref{eq:hdg1}
in the $\|\cdot\|_{1,\beta}$ norm and in the $L_2$ norm. The GMRES algorithm is stopped when the residual is reduced by $10^{-11}$. All the computation is performed in MATLAB.

\begin{example}[Constant convection]\label{ex:cc}
In this example, we take $\bm{\zeta}=[1,0]^t, \gamma=1$ in \eqref{eq:hdg1} which satisfy the assumption \eqref{eq:advassump}. Let the exact solution to \eqref{eq:fsystem} be
$${y}=\sin(\pi x)\sin(\pi y),{p}=\sin(\pi x)\sin(\pi y).$$ The right-hand sides $f$ and $g$ are calculated accordingly.
\end{example}

We first report the convergence rates of the HDG methods. In Table \ref{table:cch1}, we calculate the errors $\|(\bq-\bq_h,y-y_h,y-\widehat{y}_h)\|_{1,\beta}+\|(\bp-\bp_h,p-p_h,p-\widehat{p}_h)\|_{1,\beta}$ where the $\|\cdot\|_{1,\beta}$ norm is defined in \eqref{eq:enorm}. We observe that for $\beta=1, 10^{-2}$, the convergence are $O(h^{k+\frac12})$ for both polynomial degrees. The convergence rates tend to $O(h^{k+1})$ as $\beta$ goes to zero. These results are consistent with our result in Theorem \ref{thm:hdgesti}. 
We also report the errors $\|y-y_h\|_{\LT}$ and $\|p-p_h\|_{\LT}$ in Tables \ref{table:ccl2y} and \ref{table:ccl2p} respectively. We observe almost $O(h^{k+1})$ convergence rates for both variables, which are better than our results in Theorem \ref{thm:hdgesti}. Indeed, we do not utilize any duality argument to establish the $L_2$ estimates, rather, they follow directly as a simple consequence of Theorem \ref{thm:hdgesti}. Moreover, the convergence rates of our $L_2$ norm results are consist with those in \cite{chen2018hdg} when $\beta=1$.

\begin{table}
\footnotesize
\caption{Convergence rates for Example \ref{ex:cc} in the energy norm $\|\cdot\|_{1,\beta}$}\label{table:cch1}
\begin{center}
  \begin{tabular}{ccc|cc|cc|cc} \hline
&\multicolumn{8}{c}{$k=1$}\\
\hline
\multirow{2}{*}{$l$}&\multicolumn{2}{c}{$\beta=1$}&\multicolumn{2}{c}{$\beta=10^{-2}$}&\multicolumn{2}{c}{$\beta=10^{-4}$}&\multicolumn{2}{c}{$\beta=10^{-8}$}\\
\cline{2-9}
&Error&Rate&Error&Rate&Error&Rate&Error&Rate\\
\hline
$1$\ \vline&1.58e-2&-&6.19e-3&-&4.06e-3&-&2.59e-3&-\\
 $2$\ \vline&4.99e-3&1.66&1.82e-3&1.77&1.07e-3&1.92&6.51e-4&1.99\\
 $3$\ \vline&1.64e-3&1.61&5.68e-4&1.68&2.89e-4&1.89&1.65e-4&1.98\\
 $4$\ \vline&5.59e-4&1.55&1.86e-4&1.61&8.16e-5&1.82&4.30e-5&1.94\\
 \hline
 &\multicolumn{8}{c}{$k=2$}\\
\hline
\multirow{2}{*}{$l$}&\multicolumn{2}{c}{$\beta=1$}&\multicolumn{2}{c}{$\beta=10^{-2}$}&\multicolumn{2}{c}{$\beta=10^{-4}$}&\multicolumn{2}{c}{$\beta=10^{-8}$}\\
\cline{2-9}
&Error&Rate&Error&Rate&Error&Rate&Error&Rate\\
\hline
1\ \vline&3.50e-4&-&1.32e-4&-&8.26e-5&-&4.76e-5&-\\
2\ \vline&5.70e-5&2.62&2.01e-5&2.72&1.10e-5&2.91&6.01e-6&2.99\\
3\ \vline&9.61e-6&2.57&3.24e-6&2.63&1.52e-6&2.86&7.78e-7&2.95\\
4\ \vline&1.66e-6&2.53&5.42e-7&2.58&2.22e-7&2.78&1.07e-7&2.86\\
\hline
  \end{tabular}
\end{center}
\footnotesize
\caption{Convergence rates of $y$ for Example \ref{ex:cc} in the $L_2$ norm}\label{table:ccl2y}
\begin{center}
\begin{tabular}{ccc|cc|cc|cc}
\hline
&\multicolumn{8}{c}{$k=1$}\\
\hline
\multirow{2}{*}{$h$}&\multicolumn{2}{c}{$\beta=1$}&\multicolumn{2}{c}{$\beta=10^{-2}$}&\multicolumn{2}{c}{$\beta=10^{-4}$}&\multicolumn{2}{c}{$\beta=10^{-8}$}\\
\cline{2-9}
&Error&Rate&Error&Rate&Error&Rate&Error&Rate\\
\hline
1\ \vline&2.73e-3&-&2.76e-3&-&2.97e-3&-&1.83e-3&-\\
2\ \vline&6.79e-4&2.00&6.84e-4&2.01&7.16e-4&2.05&4.59e-4&2.00\\
3\ \vline&1.70e-4&2.00&1.70e-4&2.01&1.74e-4&2.04&1.17e-4&1.97\\
4\ \vline&4.23e-5&2.01&4.24e-5&2.00&4.30e-5&2.02&3.11e-5&1.91\\
\hline
&\multicolumn{8}{c}{$k=2$}\\
\hline
\multirow{2}{*}{$h$}&\multicolumn{2}{c}{$\beta=1$}&\multicolumn{2}{c}{$\beta=10^{-2}$}&\multicolumn{2}{c}{$\beta=10^{-4}$}&\multicolumn{2}{c}{$\beta=10^{-8}$}\\
\cline{2-9}
&Error&Rate&Error&Rate&Error&Rate&Error&Rate\\
\hline
1\ \vline&5.37e-5&-&5.42e-5&- & 5.81e-5& -&3.36e-5&- \\
2\ \vline&6.71e-6&3.00&6.74e-6&3.01 &7.01e-6 & 3.05   &4.24e-6&2.99\\
3\ \vline&8.39e-7&3.00&8.40e-7&3.00   &8.58e-7 &3.03  &5.55e-7&2.93\\
4\ \vline&1.05e-7&3.00&1.05e-7&3.00    &1.06e-7 &3.02  &8.16e-8&2.77\\
\hline
  \end{tabular}
\end{center}

\footnotesize
\caption{Convergence rates of $p$ for Example \ref{ex:cc} in the $L_2$ norm}\label{table:ccl2p}
\begin{center}
\begin{tabular}{ccc|cc|cc|cc}
\hline
&\multicolumn{8}{c}{$k=1$}\\
\hline
\multirow{2}{*}{$h$}&\multicolumn{2}{c}{$\beta=1$}&\multicolumn{2}{c}{$\beta=10^{-2}$}&\multicolumn{2}{c}{$\beta=10^{-4}$}&\multicolumn{2}{c}{$\beta=10^{-8}$}\\
\cline{2-9}
&Error&Rate&Error&Rate&Error&Rate&Error&Rate\\
\hline
1\ \vline&2.72e-3&-&2.67e-3&-&2.32e-3&-&1.84e-3&-\\
2\ \vline&6.78e-4&2.00&6.73e-4&1.99&6.29e-4&1.88&4.61e-4&2.00\\
3\ \vline&1.69e-4&2.00&1.69e-4&1.99&1.63e-4&1.95&1.16e-4&1.99\\
4\ \vline&4.23e-5&2.00&4.23e-5&2.00&4.16e-5&1.97&2.96e-5&1.97\\
\hline
&\multicolumn{8}{c}{$k=2$}\\
\hline
\multirow{2}{*}{$h$}&\multicolumn{2}{c}{$\beta=1$}&\multicolumn{2}{c}{$\beta=10^{-2}$}&\multicolumn{2}{c}{$\beta=10^{-4}$}&\multicolumn{2}{c}{$\beta=10^{-8}$}\\
\cline{2-9}
&Error&Rate&Error&Rate&Error&Rate&Error&Rate\\
\hline
1\ \vline&5.36e-5&-&5.32e-5&-&4.80e-5&-&3.37e-5&-\\
2\ \vline&6.71e-6&3.00&6.68e-6&2.99&6.36e-6&2.92&4.26e-6&2.98\\
3\ \vline&8.38e-7&3.00&8.36e-7&3.00&8.17e-7&2.96&5.43e-7&2.97\\
4\ \vline&1.05e-7&3.00&1.05e-7&2.99&1.03e-7&2.99&6.85e-8&2.99\\
\hline
\end{tabular}
\end{center}
\end{table}

We then report the number of iterations for BDDC preconditioned GMRES. In Table \ref{table:ccgmrescounts}, we set 
$H/h=6$ and present the iteration counts for different values of $\beta$ and various numbers of subdomains. We observe that the iteration counts remain independent of the number of subdomains for $k=1$ and $k=2$ with a fixed $\beta$. This is consistent with the results in \cite{TWZstokes2020,TZAD2021,TuLi:2008:BDDCAD} and the references therein, which shows the numerical scalability of the BDDC algorithm. Additionally, we find that the number of iterations decreases as $\beta$ decreases, which is also consistent with the results in \cite{brenner2020multigrid}. In Table \ref{table:ccgmrescounts1}, we fix the number of subdomains at 36 and report the iteration counts for different values of $h$. We observe a slight increase in the number of iterations as $H/h$ increases, which aligns with the findings in \cite{TZAD2021} and \cite{TWZstokes2020}. Additionally, we clearly see that the BDDC algorithm is robust with respect to $\beta$, requiring fewer iterations for smaller $\beta$, which is again consistent with the results in \cite{brenner2020multigrid}.

\begin{table}
\footnotesize
\caption{GMRES numbers of iterations for Example \ref{ex:cc} with different values of $\beta$ and $H/h=6$}\label{table:ccgmrescounts}
\begin{center}
\begin{tabular}{ccccc|cccc}
&\multicolumn{4}{c}{$k=1$}&\multicolumn{4}{c}{$k=2$}\\
\hline
\multirow{2}{*}{$\beta$}&\multicolumn{8}{c}{Number of subdomains}\\
\cline{2-9}\\[-2ex]
&$4^2$&$8^2$&$16^2$&$32^2$&$4^2$&$8^2$&$16^2$&$32^2$\\
\hline
$1$&21  & 25  &25   & 24 &29  &29  &30   &35\\
$10^{-2}$&22  &25  &23   &21  & 30  &24  &25  &30\\
$10^{-4}$&25  &24  &23  &21  &33  &24  &28   &30\\
$10^{-6}$&17  &24  &19   &19  &21  &27  &25   &21\\
$10^{-8}$&8  &11  &17   &20 &10  &15  &21   &26\\
$10^{-10}$&6  & 7 &8   &11 &7  &8  &11   &15\\
\hline
\end{tabular}
\end{center}
\footnotesize
\caption{GMRES numbers of iterations for Example \ref{ex:cc} with different values of $\beta$ and $6\times 6$ subdomains}\label{table:ccgmrescounts1}
\begin{center}
\begin{tabular}{cllll|llll}
&\multicolumn{4}{c}{$k=1$}&\multicolumn{4}{c}{$k=2$}\\
\hline
\multirow{2}{*}{$\beta$}&\multicolumn{8}{c}{$H/h$}\\
\cline{2-9}\\[-2ex]
&$4$&$8$&$16$&$20$&$4$&$8$&$16$&$20$\\
\hline
$1$  &21  & 27  &29   & 28 &28  &33  &39   &42 \\
$10^{-2}$ &21  &28  &26   &28  & 24  &28  &35  &38 \\    
$10^{-4}$ &21  &26  &30   &30  &24  &28  &33   &36 \\   
$10^{-6}$ &18  &23  &31   &34  &22  &25  &30   &33 \\ 
$10^{-8}$ &8  &11  &15   &17 &11  &15  &21   &23  \\ 
$10^{-10}$&6  & 7 &8   &9 &7  &8  &10   &12  \\ 
\hline
\end{tabular}
\end{center}
\footnotesize
\caption{Convergence rates for Example \ref{ex:vc} in the energy norm $\|\cdot\|_{1,\beta}$}\label{table:vch1}
\begin{center}
\begin{tabular}{ccc|cc|cc|cc}
\hline
&\multicolumn{8}{c}{$k=1$}\\
\hline
\multirow{2}{*}{$l$}&\multicolumn{2}{c}{$\beta=1$}&\multicolumn{2}{c}{$\beta=10^{-2}$}&\multicolumn{2}{c}{$\beta=10^{-4}$}&\multicolumn{2}{c}{$\beta=10^{-8}$}\\
\cline{2-9}
&Error&Rate&Error&Rate&Error&Rate&Error&Rate\\
\hline
$1$\ \vline&1.52e-2&-&6.07e-3&-&4.09e-3&-&2.59e-3&-\\
$2$\ \vline&4.79e-3&1.66&1.78e-3&1.77&1.08e-3&1.92&6.51e-4&1.99\\
$3$\ \vline&1.58e-3&1.60&5.52e-4&1.69&2.90e-4&1.90&1.65e-4&1.98\\
$4$\ \vline&5.39e-4&1.55&1.80e-4&1.62&8.14e-5&1.83&4.30e-5&1.94\\
\hline
&\multicolumn{8}{c}{$k=2$}\\
\hline
\multirow{2}{*}{$l$}&\multicolumn{2}{c}{$\beta=1$}&\multicolumn{2}{c}{$\beta=10^{-2}$}&\multicolumn{2}{c}{$\beta=10^{-4}$}&\multicolumn{2}{c}{$\beta=10^{-8}$}\\
\cline{2-9}
&Error&Rate&Error&Rate&Error&Rate&Error&Rate\\
\hline
1\ \vline&4.67e-4&-&1.28e-4&-&8.35e-5&-&4.76e-5&-\\
2\ \vline&7.59e-5&2.62&1.93e-5&2.73&1.11e-5&2.91&6.01e-6&2.99\\
3\ \vline&1.28e-5&2.57&3.09e-6&2.64&1.52e-6&2.87&7.78e-7&2.95\\
4\ \vline&2.21e-6&2.53&5.15e-7&2.59&2.18e-7&2.80&1.07e-7&2.86\\
\hline
\end{tabular}
\end{center}
\end{table}

\begin{example}[Variable convection]\label{ex:vc}
In this example, we take $\bm{\zeta}=[y,-x]^t$ and $\gamma=1$ in \eqref{eq:hdg1}, which satisfy \eqref{eq:advassump}. Let the exact solution be ${y}=\sin(\pi x)\sin(\pi y),{p}=\sin(\pi x)\sin(\pi y)$.  
\end{example}

Similarly to Example \ref{ex:cc}, we report the errors in $\|\cdot\|_{1,\beta}$ norm in Example \ref{ex:vc} with $k=1$ and $k=2$. We again observe $O(h^{k+\frac12})$ for $\beta=1, 10^{-2}$ and almost $O(h^{k+1})$ as $\beta$ goes to zero. We can also see similar convergence rates in $L_2$ norm for $y$ and $p$ as those in Example \ref{ex:cc} in Tables \ref{table:vcl2y} and \ref{table:vcl2p}.

We also report the number of iterations of GMRES in Tables \ref{table:vcgmrescounts} and \ref{table:vcgmrescounts1} similarly to those in Example \ref{ex:cc}. Again, we observe that our BDDC preconditioner is robust with respect to $\beta$.

\begin{table}
\footnotesize
\caption{Convergence rates of $y$ for Example \ref{ex:vc} in the $L_2$ norm}\label{table:vcl2y}
\begin{center}
\begin{tabular}{ccc|cc|cc|cc}
\hline
&\multicolumn{8}{c}{$k=1$}\\
\hline
\multirow{2}{*}{$h$}&\multicolumn{2}{c}{$\beta=1$}&\multicolumn{2}{c}{$\beta=10^{-2}$}&\multicolumn{2}{c}{$\beta=10^{-4}$}&\multicolumn{2}{c}{$\beta=10^{-8}$}\\
\cline{2-9}
&Error&Rate&Error&Rate&Error&Rate&Error&Rate\\
\hline
1\ \vline&2.78e-3&-&2.81e-3&-&3.04e-3&-&1.83e-3&-\\
2\ \vline&6.94e-4&2.00&6.99e-4&2.01&7.35e-4&2.05&4.59e-4&2.00\\
3\ \vline&1.73e-4&2.00&1.74e-4&2.01&1.79e-4&2.04&1.17e-4&1.97\\
4\ \vline&4.34e-5&2.00&4.34e-5&2.00&4.41e-5&2.02&3.11e-5&1.91\\
\hline
&\multicolumn{8}{c}{$k=2$}\\
\hline
\multirow{2}{*}{$h$}&\multicolumn{2}{c}{$\beta=1$}&\multicolumn{2}{c}{$\beta=10^{-2}$}&\multicolumn{2}{c}{$\beta=10^{-4}$}&\multicolumn{2}{c}{$\beta=10^{-8}$}\\
\cline{2-9}
&Error&Rate&Error&Rate&Error&Rate&Error&Rate\\
\hline
1\ \vline&5.52e-5&-&5.57e-5&-&6.00e-5&-&3.36e-5&-\\
2\ \vline&6.92e-6&3.00&6.95e-6&3.00&7.26e-6&3.05&4.24e-6&2.99\\
3\ \vline&8.66e-7&3.00&8.68e-7&3.00&8.88e-7&3.03&5.55e-7&2.93\\
4\ \vline&1.08e-7&3.00&1.08e-7&3.01&1.10e-7&3.01&8.16e-8&2.77\\
\hline
\end{tabular}
\end{center}
\footnotesize
\caption{Convergence rates of $p$ for Example \ref{ex:vc} in the $L_2$ norm}\label{table:vcl2p}
\begin{center}
\begin{tabular}{ccc|cc|cc|cc}
\hline
&\multicolumn{8}{c}{$k=1$}\\
\hline
\multirow{2}{*}{$h$}&\multicolumn{2}{c}{$\beta=1$}&\multicolumn{2}{c}{$\beta=10^{-2}$}&\multicolumn{2}{c}{$\beta=10^{-4}$}&\multicolumn{2}{c}{$\beta=10^{-8}$}\\
\cline{2-9}
&Error&Rate&Error&Rate&Error&Rate&Error&Rate\\
\hline
1\ \vline&2.77e-3&-&2.72e-3&-&2.33e-3&-&1.84e-3&-\\
2\ \vline&6.93e-4&2.00&6.87e-4&1.99&6.38e-4&1.87&4.61e-4&2.00\\
3\ \vline&1.73e-4&2.00&1.73e-4&1.99&1.67e-4&1.93&1.16e-4&1.99\\
4\ \vline&4.33e-5&2.00&4.33e-5&2.00&4.25e-5&1.97&2.96e-5&1.97\\
\hline
&\multicolumn{8}{c}{$k=2$}\\
\hline
\multirow{2}{*}{$h$}&\multicolumn{2}{c}{$\beta=1$}&\multicolumn{2}{c}{$\beta=10^{-2}$}&\multicolumn{2}{c}{$\beta=10^{-4}$}&\multicolumn{2}{c}{$\beta=10^{-8}$}\\
\cline{2-9}
&Error&Rate&Error&Rate&Error&Rate&Error&Rate\\
\hline
1\ \vline&5.51e-5&-&5.45e-5&-&4.87e-5&-&3.37e-5&-\\
2\ \vline&6.91e-6&3.00&6.88e-6&2.99&6.52e-6&2.90&4.26e-6&2.98\\
3\ \vline&8.65e-7&3.00&8.63e-7&2.99&8.41e-7&2.95&5.43e-7&2.97\\
4\ \vline&1.08e-7&3.00&1.08e-7&3.00&1.07e-7&2.97&6.88e-8&2.98\\
\hline
\end{tabular}
\end{center}
\footnotesize
\caption{GMRES numbers of iterations for Example \ref{ex:vc} with different values of $\beta$ and $H/h=6$}\label{table:vcgmrescounts}
\begin{center}
\begin{tabular}{ccccc|cccc}
&\multicolumn{4}{c}{$k=1$}&\multicolumn{4}{c}{$k=2$}\\
\hline
\multirow{2}{*}{$\beta$}&\multicolumn{8}{c}{Number of subdomains}\\
\cline{2-9}\\[-2ex]
&$4^2$&$8^2$&$16^2$&$32^2$&$4^2$&$8^2$&$16^2$&$32^2$\\
\hline
$1$  &21  & 25  &25   &24  &29  &29  &30   &35 \\
$10^{-2}$ &22 &25  &23   &21  &30   &24  &25  &31 \\    
$10^{-4}$ &25  &24  &23   &21  &33  &24  &28   &30 \\   
$10^{-6}$ &16  &24  &19   &19  &21  &27  &25   &21 \\ 
$10^{-8}$ &8  &11  &17   &20 &10  &15  &21   &26  \\ 
$10^{-10}$&6  &7  &8  &11 &7  &8  & 11  & 15 \\ 
\hline
\end{tabular}
\end{center}
\footnotesize
\caption{GMRES numbers of iterations for Example \ref{ex:vc} with different values of $\beta$ and $6\times 6$ subdomains}\label{table:vcgmrescounts1}
\begin{center}
\begin{tabular}{cllll|llll}
&\multicolumn{4}{c}{$k=1$}&\multicolumn{4}{c}{$k=2$}\\
\hline
\multirow{2}{*}{$\beta$}&\multicolumn{8}{c}{$H/h$}\\
\cline{2-9}\\[-2ex]
&$4$&$8$&$16$&$20$&$4$&$8$&$16$&$20$\\
\hline
$1$  &21  & 27  &29   &30  &28  &32  &39   &42 \\
$10^{-2}$ &21  &28     &26   &28  &25   &28  &35  &38 \\    
$10^{-4}$ &21  &26  &30   &30  &24  &28  &33  &36 \\   
$10^{-6}$ &18  &24  &31   &34  &22  &25  &30   &33 \\ 
$10^{-8}$ &8  &11  &15   &17 &11  &15  &21   &23  \\ 
$10^{-10}$&6  &7  &8  &9 &7  &8  & 11  & 12 \\ 
\hline
\end{tabular}
\end{center}
\end{table}


\begin{example}\label{ex:bdlayer}
In this example, we take $\bm{\zeta}=[0,0]^t, \gamma=1$ in \eqref{eq:hdg1} which satisfy the assumption \eqref{eq:advassump}. We let $f=1$ and $g=0$. The exact solution of this example can be found using double sine series (cf. \cite{brenner2020multigrid}). The solutions of this example exhibit boundary layers when $\beta$ goes to zero.
\end{example}

See Figure \ref{fig:1e-4yh} for the numerical solution $y_h$  and Figure \ref{fig:1e-4y} for the exact solution $y$ when $\beta = 10^{-4}$, as well as Figures \ref{fig:1e-6yh} and \ref{fig:1e-6y} when $\beta = 10^{-6}$. One can clearly observe that the solutions $y$ indeed exhibit boundary layers as $\beta$ approaches zero.

\begin{figure}[h]
    \centering
    \begin{minipage}[b]{0.4\textwidth}
        \centering
        \includegraphics[width=\textwidth]{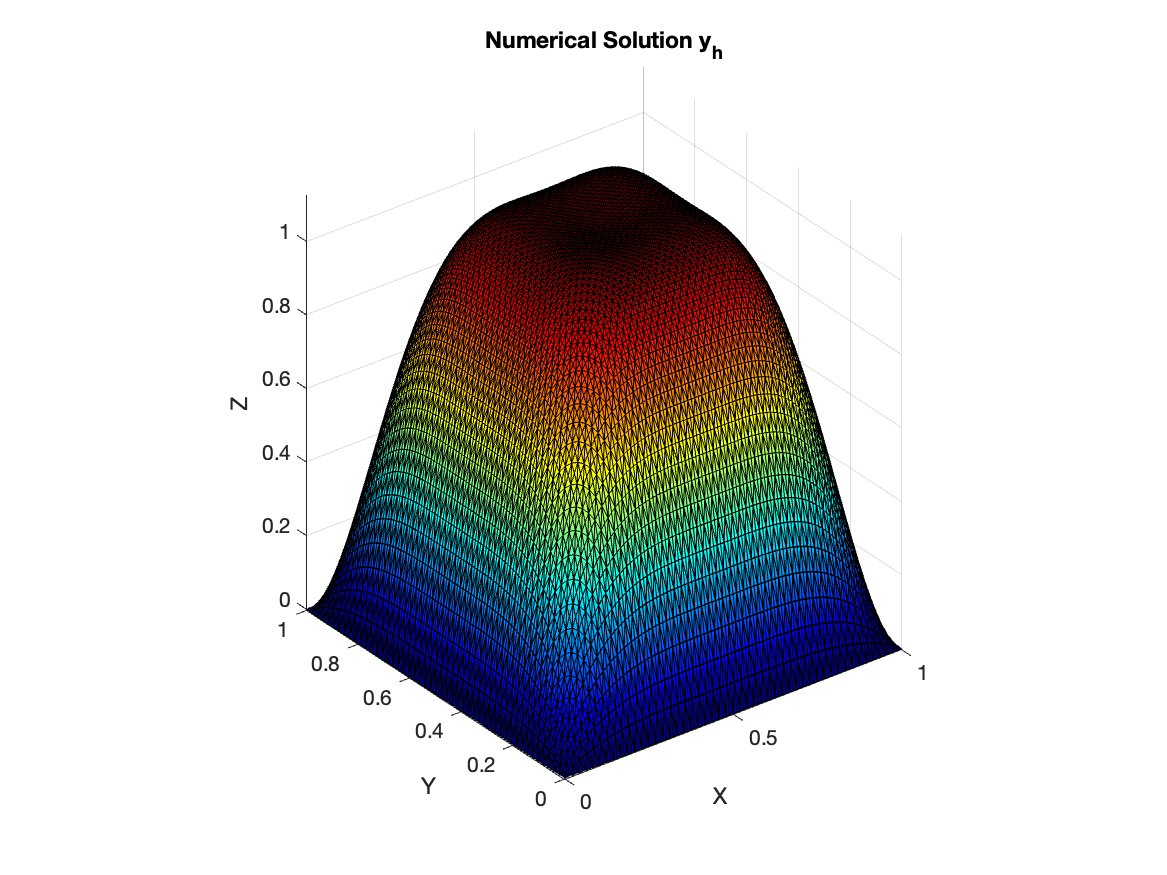}
        \caption{Numerical solution with $\beta=10^{-4}$ and $h=1/96$.}
        \label{fig:1e-4yh}
    \end{minipage}
    \hspace{1cm}
    \begin{minipage}[b]{0.4\textwidth}
        \centering
        \includegraphics[width=\textwidth]{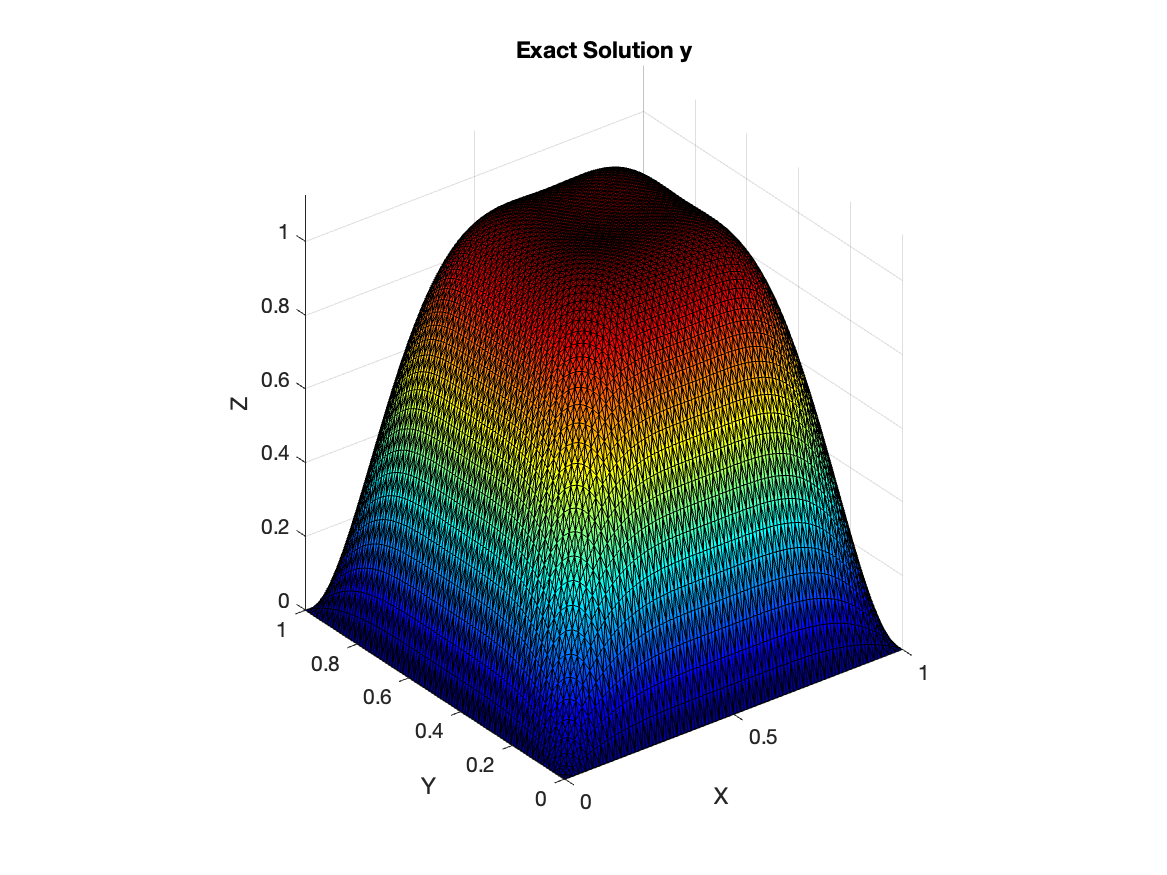}
        \caption{Exact solution with $\beta=10^{-4}$ and $h=1/96$.}
        \label{fig:1e-4y}
    \end{minipage}
\vfill
   \begin{minipage}[b]{0.4\textwidth}
        \centering
        \includegraphics[width=\textwidth]{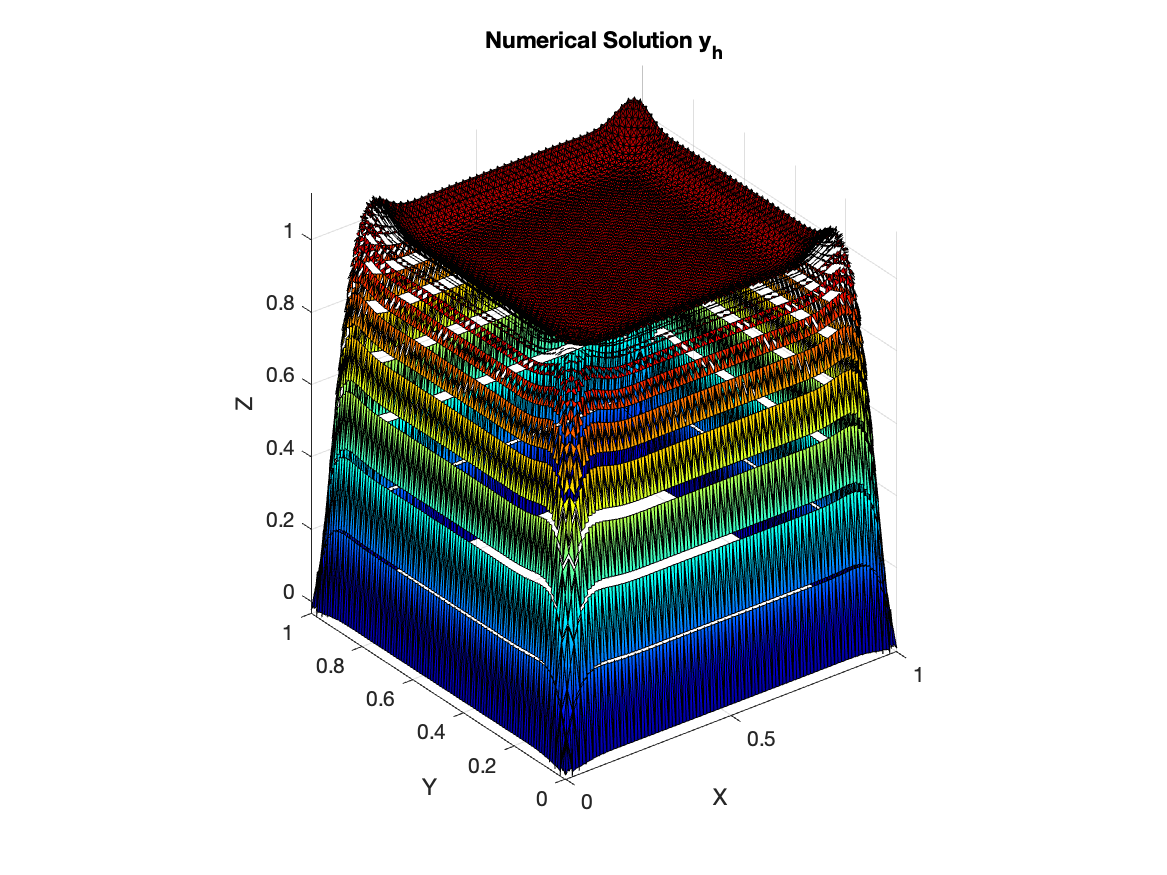}
        \caption{Numerical solution with $\beta=10^{-6}$ and $h=1/96$.}
        \label{fig:1e-6yh}
    \end{minipage}
     \hspace{1cm}
    \begin{minipage}[b]{0.4\textwidth}
        \centering
        \includegraphics[width=\textwidth]{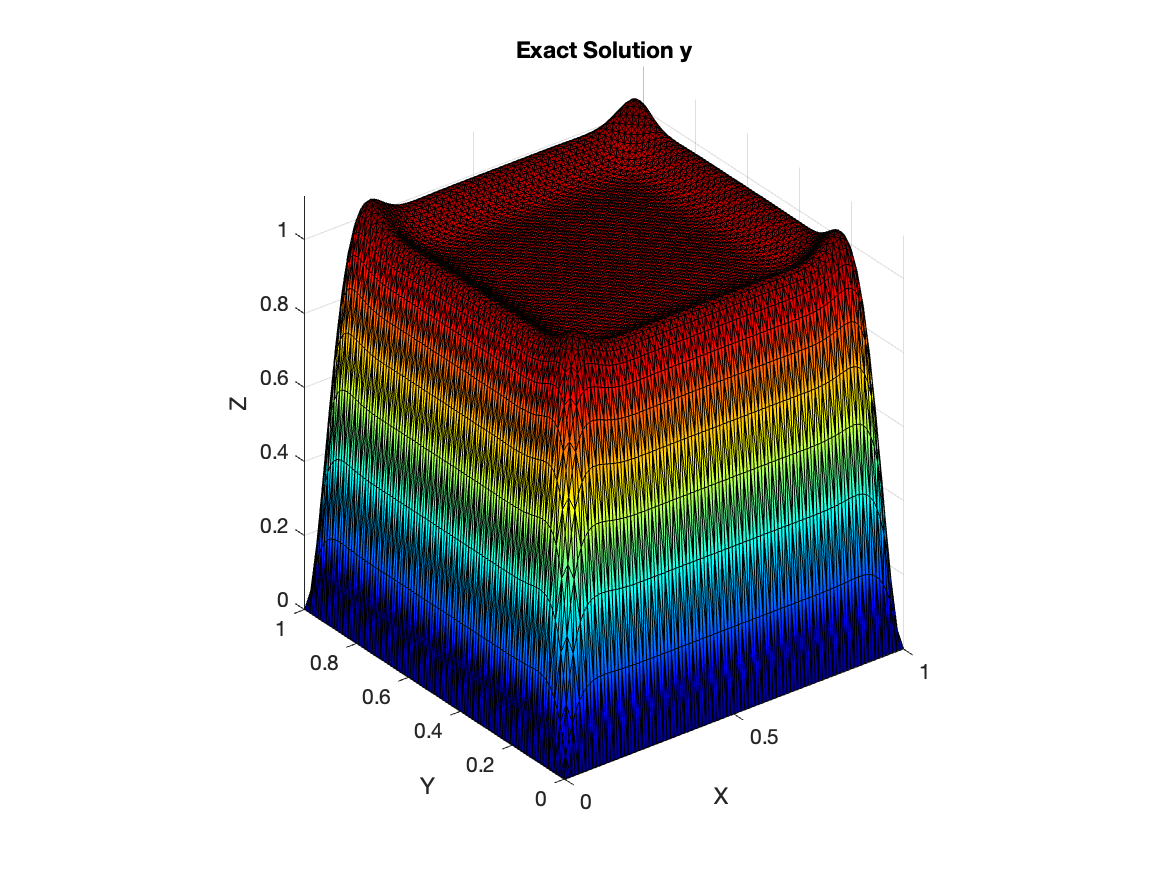}
        \caption{Exact solution with $\beta=10^{-6}$ and $h=1/96$.}
        \label{fig:1e-6y}
    \end{minipage}
    
\end{figure}

 We first report the errors and convergence rates in the $\|\cdot\|_{1, \beta}$ norm in Table \ref{table:bdh1}, where we observe that the error convergence rates are nearly $O(h^{k+1})$. Notably, the convergence rate deteriorates at coarse levels when $\beta$ is small. This is due to the presence of the boundary layer; however, the convergence rate improves as $h$ decreases, aligning with the results in Theorem \ref{thm:hdgesti}. Similar trends can be seen in Tables \ref{table:bdl2y} and \ref{table:bdl2p} for $y$ and $p$ in the $L_2$ norm. Finally, in Table \ref{table:bdgmrescounts}, we observe that the number of iterations remains independent of the number of subdomains and is robust with respect to $\beta$.

\begin{table}
\footnotesize
\caption{Convergence rates for Example \ref{ex:bdlayer} in the energy norm $\|\cdot\|_{1,\beta}$}\label{table:bdh1}
\begin{center}
\begin{tabular}{ccc|cc|cc|cc}
\hline
&\multicolumn{8}{c}{$k=1$}\\
\hline
\multirow{2}{*}{$l$}&\multicolumn{2}{c}{$\beta=1$}&\multicolumn{2}{c}{$\beta=10^{-2}$}&\multicolumn{2}{c}{$\beta=10^{-4}$}&\multicolumn{2}{c}{$\beta=10^{-6}$}\\
\cline{2-9}
&Error&Rate&Error&Rate&Error&Rate&Error&Rate\\
\hline
$1$\ \vline&8.24e-4&-&1.31e-3&-&1.23e-2&-&1.25e-1&-\\
$2$\ \vline&2.20e-4&1.91&3.33e-4&1.98&3.67e-3&1.74&4.49e-2&1.49\\
$3$\ \vline&5.83e-5&1.92&8.37e-5&1.99&9.97e-4&1.88&1.17e-2&1.94\\
$4$\ \vline&1.57e-5&1.89&2.11e-5&1.99&2.59e-4&1.94&3.12e-3&1.91\\
\hline
\end{tabular}
\end{center}
\footnotesize
\caption{Convergence rates of $y$ for Example \ref{ex:bdlayer} in the $L_2$ norm}\label{table:bdl2y}
\begin{center}
\begin{tabular}{ccc|cc|cc|cc}
\hline
&\multicolumn{8}{c}{$k=1$}\\
\hline
\multirow{2}{*}{$h$}&\multicolumn{2}{c}{$\beta=1$}&\multicolumn{2}{c}{$\beta=10^{-2}$}&\multicolumn{2}{c}{$\beta=10^{-4}$}&\multicolumn{2}{c}{$\beta=10^{-6}$}\\
\cline{2-9}
&Error&Rate&Error&Rate&Error&Rate&Error&Rate\\
\hline
1\ \vline&1.22e-5&-&9.89e-4&-&1.17e-2&-&1.24e-1&-\\
2\ \vline&3.05e-6&2.00&2.49e-4&1.99&3.50e-3&1.74&4.46e-2&1.48\\
3\ \vline&7.63e-7&2.00&6.26e-5&1.99&9.53e-4&1.88&1.17e-2&1.93\\
4\ \vline&1.91e-7&2.00&1.57e-5&2.00&2.48e-4&1.94&3.10e-3&1.92\\
5\ \vline&4.77e-8&2.00&3.92e-6&2.00&6.31e-5&1.97&9.09e-4&1.77\\
\hline
\end{tabular}
\end{center}
\footnotesize
\caption{Convergence rates of $p$ for Example \ref{ex:bdlayer} in the $L_2$ norm}\label{table:bdl2p}
\begin{center}
\begin{tabular}{ccc|cc|cc|cc}
\hline
&\multicolumn{8}{c}{$k=1$}\\
\hline
\multirow{2}{*}{$h$}&\multicolumn{2}{c}{$\beta=1$}&\multicolumn{2}{c}{$\beta=10^{-2}$}&\multicolumn{2}{c}{$\beta=10^{-4}$}&\multicolumn{2}{c}{$\beta=10^{-6}$}\\
\cline{2-9}
&Error&Rate&Error&Rate&Error&Rate&Error&Rate\\
\hline
1\ \vline&1.67e-4&-&1.54e-4&-&2.61e-4&-&1.08e-4&-\\
2\ \vline&4.19e-5&1.99&3.87e-5&1.99&6.54e-5&2.00&5.98e-5&0.85\\
3\ \vline&1.05e-5&2.00&9.69e-6&2.00&1.61e-5&2.02&2.47e-5&1.28\\
4\ \vline&2.62e-6&2.00&2.42e-6&2.00&3.98e-6&2.02&7.44e-6&1.73\\
5\ \vline&6.60e-7&1.99&6.09e-7&1.99&9.92e-7&2.00&1.92e-6&1.95\\
\hline
\end{tabular}
\end{center}
\footnotesize
\caption{GMRES numbers of iterations for Example \ref{ex:bdlayer} with different values of $\beta$ and $H/h=6$}\label{table:bdgmrescounts}
\begin{center}
\begin{tabular}{cccccc}
&\multicolumn{5}{c}{$k=1$}\\
\hline
\multirow{2}{*}{$\beta$}&\multicolumn{5}{c}{Number of subdomains}\\
\cline{2-6}\\[-2ex]
&$4^2$&$8^2$&$16^2$&$32^2$&$64^2$\\
\hline
$1$  &21  & 25  &25 &24 &24  \\
$10^{-2}$ &23 &25  &23   &21  &21 \\    
$10^{-3}$ &23  &24  &23   &21 &21  \\   
$10^{-4}$ &24  &24  &24   &23 &20 \\ 
$10^{-5}$ &23  &25  &23   &23 &21 \\ 
$10^{-6}$&15  &24  &23  &23 &22  \\ 
\hline
\end{tabular}
\end{center}
\end{table}

\section{Concluding Remarks}\label{sec:conclude}

In this work, we conduct a thorough analysis of the HDG methods for an optimal control problem constrained by a convection-diffusion-reaction equation. We proved the convergence in an energy norm and track the parameter $\beta$ explicitly. We also propose a BDDC algorithm to solve the discretized system and observe robustness with respect to $\beta$. The analysis framework of the HDG methods for optimal control problems can possibly be extended to convection-dominated state equations. It is also interesting to consider optimal control problems with pointwise state constraints (cf. \cite{brenner2023multigrid,brenner2021P1,liu2024discontinuous}). Meanwhile, the theoretical analysis of the convergence rates of the BDDC algorithms is under investigation in an ongoing work.

\section*{Acknowledgment}
The authors would like to thank Prof. Xuemin Tu for helpful discussions regarding this project.

\bibliographystyle{siamplain}
\bibliography{references}
\end{document}